\documentclass[10.5pt]{amsart}


\usepackage{geometry}              
\geometry{letterpaper}                   
\geometry{left=1in, right=1in, bottom=1in, top=1in, includefoot}
\usepackage{graphicx}
\usepackage{amsmath,amsthm,amssymb}
\usepackage{epstopdf}
\DeclareGraphicsRule{.tif}{png}{.png}{`convert #1 `dirname #1`/`basename #1 .tif`.png}
\usepackage{url}

\usepackage{amssymb,amsfonts}

\usepackage[all,arc]{xy}
\usepackage{enumerate}
\usepackage{mathrsfs}
\usepackage{amsthm}
\usepackage{standalone}
\usepackage{tikz,pgf}
\usepackage{graphicx}
\usepackage{caption}
\usepackage{pinlabel}

 \usepackage{amsaddr}

\usepackage{array}
\usepackage{enumitem}
\usepackage{scrextend}
\usepackage{tikz}
\usepackage{tikz-cd}
\usepackage{amsmath}
\usepackage{permute}
\usetikzlibrary{positioning}

\usepackage{clrscode3e}


\newtheorem{thm}{Theorem}[section]

\newtheorem{prop}[thm]{Proposition}
\newtheorem{lem}[thm]{Lemma}
\newtheorem{conj}[thm]{Conjecture}

\newtheorem*{rem*}{Remark}
\theoremstyle{definition}
\newtheorem{defn}[thm]{Definition}

\newtheorem{example}[thm]{Example}

\newtheorem{rem}[thm]{Remark}

\numberwithin{equation}{section}

\title{Coxeter quotients of knot groups through
16 crossings}
\thanks{MSC codes: 57M25, 57M27, 57M05.}

\author{Ryan Blair, Alexandra Kjuchukova, Nathaniel Morrison}

\begin{document}
\maketitle

\begin{abstract} We find explicit maximal rank Coxeter quotients for the knot groups of $595,515$ out of the $1,701,936$ knots through 16 crossings. We thus calculate the bridge numbers and verify Cappell and Shaneson's Meridional Rank Conjecture for these knots. In addition, we provide a computational tool for establishing the conjecture for those knots beyond 16 crossings whose meridional ranks can be detected via finite Coxeter quotients.

 \end{abstract}

\section{Introduction}

Knot groups and their quotients provide effective techniques for distinguishing and tabulating knots, studying their properties and calculating a variety of classical invariants. Prime knots are determined, up to reflection, by their groups~\cite{gordon1989knots}. Further, dihedral and symmetric group quotients have been as instrumental as polynomial invariants in creating and expanding the knot table~\cite{HTW, perko1974classification}. In this paper, we adopt a computational approach to studying two notoriously elusive knot invariants: the bridge number and meridional rank. We perform an exhaustive search, covering the groups of tabulated knots through 16 crossings, for quotients onto finite Coxeter groups. We find $595,515$ quotients for knots of bridge number at least 3, which implies that $601,061$ out of the first 1,701,936 (non-cyclic) knot groups admit maximal rank quotients, in the sense defined below, onto finite Coxeter groups. For approximately $38\%$ 
of these knots, we compute the bridge number for the first time. Our findings are summarized in Section~\ref{section-tables}.

Recall that given a Coxeter presentation for a Coxeter group $G$, a {\it reflection} is any element conjugate to one of the generators in this presentation. The {Coxeter rank} of $G$ is the cardinality of a minimal generating set of reflections for $G$. In this paper, the Coxeter rank will be denoted by $r(G)$ and may also be called simply ``the rank of $G$". Whenever we consider a group homomorphism $\rho: \pi_1(S^3\backslash K)\twoheadrightarrow G$ from a knot group onto a Coxeter group $G$, we will always assume that meridians of $K$ map to reflections in $G$. Sometimes we will emphasize this property by saying that $\rho$ is a {\it good} quotient.

Consider a good quotient  $\rho: \pi_1(S^3\backslash K)\twoheadrightarrow G$ as above. If $r(G)$ equals the bridge number of $K$, we say that $\rho$ is a {\it maximal rank} Coxeter quotient, abbreviated MRCQ. As the phrase suggests, the Coxeter rank of a good quotient for $K$ can never exceed the bridge number $\beta(K)$. Indeed, recall that $\beta(K)$ is an upper bound for the meridional rank $\mu(K)$. Furthermore, a generating set of meridians is mapped by a good quotient map to a generating set of reflections. Hence, for any good quotient map $\varphi: \pi_1(S^3\backslash K)\twoheadrightarrow G$, we have the inequalities
\begin{equation} \label{ineqs1}
	 \beta(K)\geq \mu(K)\geq r(G).
\end{equation}
Thus, we have a MRCQ precisely when $\beta(K)= r(G)$ holds, and this equality can sometimes be verified diagrammatically.

\begin{prop}\label{prop-equalities}\cite{baader2021coxeter}
	Let $D$ be a diagram for a knot $K$. Denote by $\omega(D)$ the Wirtinger number (Definition~\ref{def-wirt})  of $D$. Assume that $G$ is a Coxeter group such that there exists a good quotient $\pi_1(S^3\backslash K)\twoheadrightarrow G$. If the Coxeter rank of $G$ satisfies $r(G)=\omega(D)$, the Meridional Rank Conjecture holds for $K$ and we have $$\omega(D)=\omega(K)=\beta(K)=\mu(K)=r(G).$$
\end{prop}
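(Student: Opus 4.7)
The plan is to observe that under the hypotheses the five quantities $\omega(D)$, $\omega(K)$, $\beta(K)$, $\mu(K)$, $r(G)$ are forced to coincide by a squeeze argument: I will produce a chain of inequalities that starts at $\omega(D)$ and returns to $\omega(D)$, so every intermediate inequality must collapse to an equality. The ingredients I would assemble are (a) the tautology $\omega(K) \leq \omega(D)$ coming from the definition of $\omega(K)$ as the minimum of $\omega(D')$ taken over all diagrams $D'$ of $K$; (b) the theorem of Blair--Kjuchukova--Velazquez-Villanueva stating that for any knot the Wirtinger number equals the bridge number, so $\omega(K) = \beta(K)$; (c) the chain $\beta(K) \geq \mu(K) \geq r(G)$, which is exactly the inequality \eqref{ineqs1} derived just above the proposition (and which uses the assumption that $\rho$ is a \emph{good} quotient, so that meridians map to reflections); and (d) the standing hypothesis $r(G) = \omega(D)$.

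Assembling (a)--(d) in order yields
$$\omega(D) \;\geq\; \omega(K) \;=\; \beta(K) \;\geq\; \mu(K) \;\geq\; r(G) \;=\; \omega(D).$$
Since both ends of this chain agree, every intermediate inequality is an equality, which is precisely the five-way equality claimed in the proposition. In particular, the equality $\beta(K) = \mu(K)$ extracted from the chain is the statement of the Meridional Rank Conjecture for $K$, so MRC is verified for this knot as a byproduct.

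There is no conceptual obstacle beyond ensuring that each ingredient is correctly invoked: the only substantive external input is the Blair--Kjuchukova--Velazquez-Villanueva identification $\omega(K)=\beta(K)$, while the other three steps are either a definition, a prior inequality from the excerpt, or the hypothesis itself. The one thing to be slightly careful about is the direction of (a) -- it is crucial that $\omega(D)$ is the Wirtinger number of a \emph{particular} diagram and $\omega(K)$ is the infimum over diagrams, so the inequality goes the right way to close the loop.
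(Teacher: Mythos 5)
Your proof is correct and follows essentially the same route as the paper: both combine the chain $\beta(K)\geq\mu(K)\geq r(G)$ from Equation~\eqref{ineqs1} with the bound $\omega(D)\geq\omega(K)=\beta(K)$ from~\cite{blair2020wirtinger} and the hypothesis $r(G)=\omega(D)$ to close the loop and force all quantities to be equal. No issues.
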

\begin{proof}
The result follows from Equation~\ref{ineqs1}, combined with the fact that the Wirtinger number of any diagram of $K$ is an upper bound for the bridge number: $\omega(D)\geq\omega(K)=\beta(K)$, which is proved in~\cite{blair2020wirtinger}.	
\end{proof}

Given an knot $K$ with diagram $D$, we say that $D$ exhibits a maximal rank Coxeter quotient if there exists a good quotient $\varphi: \pi_1(S^3\backslash K)\twoheadrightarrow G$ such that $r(G)=\omega(D)$. The existence of such a $\varphi$ allows us to apply Proposition~\ref{prop-equalities} to prove the Meridional Rank Conjecture (Kirby List~\cite{kirby1995problems}, Problem~1.11) for $K$. Moreover, $D$ realizes the Wirtinger number of $K$, that is,  $\omega(D)$ equals the bridge number $\beta(K)$. 
In this work, we determine the diagrams in the Hoste-Thistlethwaite-Weeks table~\cite{HTW} through 16 crossings which exhibit maximal rank quotients onto finite Coxeter groups. We thereby compute the meridional ranks and bridge numbers for the corresponding knots, along the way showing that these knots satisfy the Meridional Rank Conjecture of Cappell and Shaneson. Note that the conjecture has been proven in a variety of special cases, notably torus links~\cite{RZ87}, links of meridional rank two~\cite{BZ89}, Montesinos links~\cite{BZ85} and generalized Montesinos links~\cite{LM93}, twisted links~\cite{baader2021coxeter}, and certain classes of arborescent links ~\cite{baader2021coxeter, baader2020twigs}, among others \cite{BDJW, BJW, CH14, baader2019symmetric}. It is unknown how many and precisely which knots through 16 crossings are covered by one or more of these theoretical results. In practice, however, it can be challenging to determine whether a given knot satisfies the hypotheses of some of the theorems cited above, particularly when these hypotheses include the existence of a diagram with special properties. 
This makes it difficult to identify potential counter-examples to the conjecture, that is, knots which do not belong to any of the special cases for which the conjecture is known to hold. 
Our work is a step toward bridging this gap.  
Moreover, when the meridional rank of a knot $K$ is detected by a finite Coxeter quotient, we explicitly compute the bridge number and meridional rank of $K$ from Gauss code for a diagram of $K$. 

\begin{thm}[Main Theorem]\label{thm-main}
Let $D$ be a knot diagram. $D$ admits a maximal rank quotient onto a finite Coxeter group $H$ if and only if such a quotient is detected by the algorithm outlined in Section \ref{section-homsearch}. 
\end{thm}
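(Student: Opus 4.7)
The plan is to establish the two directions separately, viewing this theorem as a soundness/completeness statement for the algorithm of Section~\ref{section-homsearch}. For the ``if'' direction (soundness), I would unpack what it means for the algorithm to detect a quotient: it produces a finite Coxeter group $H$ with $r(H) = \omega(D)$ together with an assignment of each Wirtinger generator $x_i$ (read off the arcs of $D$) to a reflection $\rho(x_i) \in H$, such that every Wirtinger relation is satisfied in $H$ and the images generate $H$. By the universal property of the Wirtinger presentation, this data extends uniquely to a surjective homomorphism $\rho: \pi_1(S^3\backslash K) \twoheadrightarrow H$ which sends meridians to reflections, so $\rho$ is a good quotient; combined with $r(H) = \omega(D)$, this is exactly an MRCQ exhibited by $D$ in the sense of the paragraph preceding Proposition~\ref{prop-equalities}.

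For the ``only if'' direction (completeness), suppose an MRCQ $\rho:\pi_1(S^3\backslash K)\twoheadrightarrow H$ for $D$ exists. Then $\rho$ is determined by its values on the Wirtinger generators, each of which must be a reflection, and these values must collectively satisfy every Wirtinger relation and generate $H$. I would then verify that the algorithm's search enumerates every such candidate pair $(H,\rho)$ up to inner automorphism of $H$: this requires checking both that for each fixed target $H$ of rank $\omega(D)$ the algorithm iterates over all reflection-labelings of the arcs of $D$, and that the algorithm sweeps through the class of \emph{all} finite Coxeter groups with $r(H) = \omega(D)$.

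The main obstacle is the second requirement. By the classification of finite irreducible Coxeter groups, the families $A_n$, $B_n$, $D_n$, $E_6, E_7, E_8$, $F_4$, $H_3, H_4$ contribute only finitely many groups of a given rank, but the dihedral groups $I_2(m)$ together with their direct products with other finite Coxeter groups produce infinitely many candidate targets of any rank at least two. To prove completeness I would need to bound the exponents $m$ that can actually occur in a good quotient of the specific knot group at hand, most naturally by exploiting the fact that all meridians are conjugate in $\pi_1(S^3\backslash K)$ (so their images land in a single conjugacy class of reflections in $H$) together with the Wirtinger relations at the crossings of $D$, which control the orders of the products $\rho(x_i)\rho(x_j)$. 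Together these yield a diagram-computable cap on the dihedral exponents, after which the search may safely terminate. Verifying that the algorithm of Section~\ref{section-homsearch} implements such a truncation correctly---equivalently, that no MRCQ is missed when the dihedral exponents are restricted in this way---is the technical heart of the argument.
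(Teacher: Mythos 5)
Your overall architecture (soundness plus completeness, with completeness handled up to inner automorphism of $H$) matches the paper, and your soundness direction is essentially the paper's. But the step you identify as ``the technical heart'' --- bounding the dihedral exponents $m$ of $I_2(m)$ factors so that the search over target groups can be truncated --- is both a misdiagnosis of where the difficulty lies and not clearly achievable as you sketch it. The Wirtinger relations at the crossings of $D$ do not directly control the orders of the products $\rho(x_i)\rho(x_j)$; for honest dihedral quotients the relevant bound comes from the determinant of the knot, i.e.\ from a Goeritz/Alexander-type computation, not from reading off the relations. The paper avoids this entirely: since all meridians of a knot are conjugate, a good surjection forces $H$ to be generated by reflections lying in a \emph{single conjugacy class}. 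Reflections in distinct irreducible factors of a reducible Coxeter group are never conjugate, so reducible targets (in particular products involving $I_2(m)$) are excluded outright, and $I_2(m)$ itself is excluded by rank once $\omega(D)\geq 3$. By the refined classification (Section~\ref{Coxeterbackground}), the finite Coxeter groups of rank $3$, $4$, or $5$ with a single conjugacy class of reflections are exactly $A_3$, $A_4$, $A_5$, $D_4$, $D_5$, $H_3$, $H_4$ --- a finite list, so no diagram-dependent truncation is needed. Without this observation (or a worked-out substitute), your completeness argument is incomplete.

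A secondary gap: you describe the algorithm as iterating over ``all reflection-labelings of the arcs of $D$,'' but the actual search only assigns reflections to a minimal generating set of \emph{seed strands} $E_G$ (drawn from a robust subset $A\subset Gen(H)$) and propagates the labels via Wirtinger relations, checking consistency at the remaining crossings. The correctness of this reduction is exactly the content of Remark~\ref{rem:seeds-suffice} and Lemma~\ref{robust}: a maximal rank quotient must send the seeds to a \emph{minimal generating set of reflections} (because the seeds' meridians generate the knot group and $r(H)=\omega(D)$), hence to an element of $Gen(H)$, hence, after conjugation, to an element of the robust set. You gesture at the inner-automorphism reduction but do not supply this argument, and it is needed to conclude that the restricted search misses nothing.
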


The result follows from three main ingredients: the equality between the bridge number and Wirtinger number of a knot (Theorem~\ref{thm-wirt}); the easy fact that the existence of a Coxeter quotient of a knot group can be detected in {\it any} diagram of the knot (Proposition~\ref{lem-AnyDiagram}); and the celebrated classification of finite Coxeter groups (Theorem~\ref{CoxeterClassification}). These results are recalled in Section~\ref{section-background}, and our proof appears in Section~\ref{section-homsearch}, which is dedicated to showing that the homomorphism search we perform is exhaustive. Therein, we also describe our method for trimming the set of possible generating sets for finite Coxeter groups without compromising the exhaustiveness of the search; this step was necessary in order to make the computation feasible.  
We have implemented the algorithm and run it on all knots through 16 crossings. Our search identified all diagrams in the knot table which admit MRCQs onto finite Coxeter groups. The data obtained by running our algorithm for all tabulated knots through 16 crossings is summarized in Section~\ref{section-tables}. We conjecture that crossing number minimizing diagrams of prime knots through 16 crossings realize the Wirtinger numbers of the corresponding knots, that is, we posit that $\omega(D)=\omega(K)$ for any minimal diagram $D$ of a prime knot $K$ through 16 crossings. If true, this would imply that we have identified precisely the knots in the table which admit maximal rank quotients onto finite Coxeter groups.

\subsection{Applications.}
The values of bridge number and meridional rank established in this paper have implications for other difficult to evaluate knot invariants. An early version of our computation was used to find the stick number of knots in several challenging cases~\cite{blair2020knots}. Additionally, the bridge number gives a lower bound on the superbridge index~\cite{Kuiper87}. It is possible to use the values of bridge number established by our algorithm together with recent upper bounds on superbridge index ~\cite{Shonkwiler20} to compute the superbridge index of some knots for which the value was previously unknown. Further, our algorithm can be adapted to accept non-planar Gauss codes and thus to give lower bounds on the virtual bridge number of virtual knots. When paired with the upper bounds from~\cite{pongtanapaisan2019wirtinger}, this technique can be used compute the virtual bridge number of many virtual knots. Finally, our algorithm can be used in conjunction with the results in~\cite{joseph2021bridge} to establish the meridional rank and bridge number of certain twist spun knots, which are knotted 2-spheres in~$\mathbb{R}^4$.

\section{Bridge numbers via Coxeter quotients}
\label{section-background}

We recall some basic definitions and necessary background on knot colorings, Coxeter groups and Wirtinger numbers of links, as well as the approach from~\cite{baader2021coxeter} for computing bridge numbers using Coxeter quotients of knot groups. 

\subsection{Classification of finite Coxeter groups} \label{section-coxeter review}\label{Coxeterbackground}

\begin{defn}
Let $\Gamma$ be a finite simple graph with edges labeled by intergers greater than $1$. The Coxeter group $C(\Gamma)$ is a group generated by a set in bijective correspondence with the vertices of $\Gamma$, subject to the following two types of relations:

\begin{enumerate}
\item $s^2=1$ for all generators s.
\item $(st)^k=1$ for all pairs of generators $s,t$ connected by and edge of weight $k\in \mathbb{N}$. 
\end{enumerate}
We call $\Gamma$ the Coxeter graph and the presentation determined by $\Gamma$ a Coxeter presentation of $C(\Gamma)$.

\end{defn}

Note that a given Coxeter group can have multiple Coxeter presentations. See Figure~\ref{fig-D6}. 
However, as we will see, this is not the case for the class of Coxeter groups that we use in this paper, namely finite Coxeter groups in which all reflections belong to the same conjugacy class.

\begin{figure}
\labellist
 \pinlabel $x$ at 84 65
 \pinlabel $3$ at 140 100
 \pinlabel $y^3$ at 140 200
 \pinlabel $2$ at 100 140 
  \pinlabel $2$ at 178 140 
 \pinlabel $xy^2$ at 200 65
 \pinlabel $x$ at 340 65
 \pinlabel $6$ at 400 100
 \pinlabel $xy$ at 460 65
\endlabellist
\includegraphics[width=80mm]{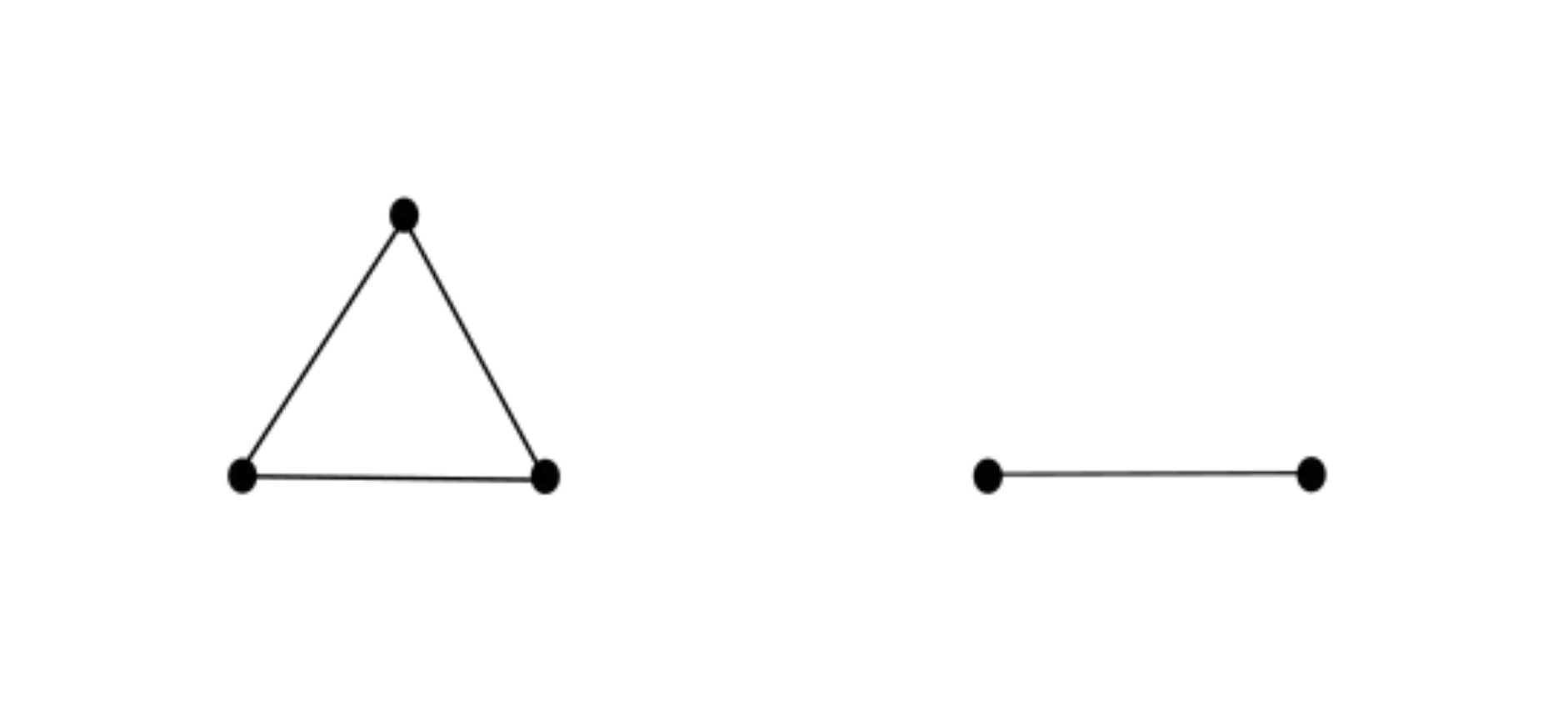}
\caption{Two Coxeter presentations for the dihedral group $D_6=\langle x, y | x^2=y^6= 1, xyx=y^{-1}\rangle $} \label{fig-D6}
\end{figure}

Finite Coxeter groups are classified. Every finite Coxeter group is isomorphic to exactly one group in a list of four infinite families and six exceptional groups. For more details on this well-known classification theorem, see any of the standard Coxeter group references, for instance Bourbaki~\cite{Bourbaki}. 

\begin{thm}\label{CoxeterClassification}
Every finite Coxeter group is a finite reflection group. Moreover, every finite Coxeter group is isomorphic to exactly one of the groups $A_n (n \geq 1), B_n (n \geq 2), D_n (n \geq 4), E_6,
E_7, E_8, F_4, H_3, H_4$ and $I_2(m) (m \geq 2)$.
\end{thm}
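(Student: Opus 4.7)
The plan is to follow the classical two-stage strategy: first show that every finite Coxeter group acts faithfully as a reflection group on a real vector space, and then classify the Coxeter graphs for which this representation has finite image.

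First, given a Coxeter graph $\Gamma$ with vertex set $S$, I would construct the \emph{geometric representation} (Tits representation) of $C(\Gamma)$ on the real vector space $V=\R^S$ with basis $\{e_s\}_{s\in S}$. Define a symmetric bilinear form $B$ on $V$ by $B(e_s,e_s)=1$ and $B(e_s,e_t)=-\cos(\pi/m_{st})$, where $m_{st}$ is the edge label (with the convention $m_{st}=2$ for non-adjacent vertices). For each $s$, let $\sigma_s(v)=v-2B(e_s,v)e_s$. A direct check shows the $\sigma_s$ satisfy the Coxeter relations, yielding a representation $\rho:C(\Gamma)\to GL(V)$. The standard argument, via the Tits cone, then shows $\rho$ is faithful. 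Crucially, if $C(\Gamma)$ is finite, the orbit of any inner product under the group action averages to a $C(\Gamma)$-invariant positive definite form, and a short linear-algebra argument identifies this form with $B$ up to scaling; hence $B$ must be positive definite. Conversely, if $B$ is positive definite then $\rho(C(\Gamma))$ preserves the unit sphere in $V$ and lies in the compact group $O(V,B)$, and discreteness forces it to be finite.

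The bulk of the work is therefore the combinatorial classification of connected Coxeter graphs whose associated form $B$ is positive definite. I would proceed by a sequence of exclusion arguments on the graph: (i) any induced subgraph of a positive definite Coxeter graph is again positive definite, so it suffices to identify the minimal non-positive-definite graphs and forbid them; (ii) on small graphs the determinant of the Gram matrix can be computed explicitly (e.g., for cycles, paths with arbitrary labels, and the "extended Dynkin" diagrams $\widetilde{A}_n, \widetilde{D}_n, \widetilde{E}_6, \widetilde{E}_7, \widetilde{E}_8$, whose Gram determinants vanish). Using these as forbidden subdiagrams, I would deduce: no cycles are allowed, at most one vertex has degree three, there is at most one edge with label $\geq 4$, edges of label $\geq 6$ must be at an endpoint, and so on. These constraints funnel the possibilities into the stated list $A_n,B_n,D_n,E_6,E_7,E_8,F_4,H_3,H_4,I_2(m)$.

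Finally, I would confirm that each graph on the list is actually positive definite, either by exhibiting a faithful action as a Euclidean reflection group (the $A_n$, $B_n$, $D_n$, $E_i$, $F_4$ cases are standard Weyl groups, $I_2(m)$ is the dihedral group of order $2m$, and $H_3,H_4$ arise as symmetries of the icosahedron and $600$-cell), or by computing the Gram determinant and checking positivity of the leading principal minors via Sylvester's criterion. Uniqueness follows because two distinct graphs in the list produce groups of different orders or with differing conjugacy-class data for reflections, so no two are isomorphic as Coxeter systems. The main obstacle is step (ii): the case analysis to rule out all non-listed diagrams is delicate and requires careful bookkeeping of forbidden subgraphs, but it is entirely mechanical once the extended Dynkin diagrams are identified as the critical "minimally non-positive-definite" cases.
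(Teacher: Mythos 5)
The paper does not actually prove this theorem: it is quoted as the classical classification of finite Coxeter groups, with pointers to Bourbaki and to Humphreys, so the right comparison for your write-up is with those references rather than with anything in the paper itself. On that score your outline is the standard two-stage argument (Tits' geometric representation; finiteness of $C(\Gamma)$ equivalent to positive definiteness of the form $B$; then the combinatorial classification of positive definite connected Coxeter graphs using the affine diagrams $\widetilde{A}_n,\widetilde{D}_n,\widetilde{E}_6,\widetilde{E}_7,\widetilde{E}_8$, etc.\ as minimal forbidden subdiagrams), and it is sound in outline. Two points deserve care if you were to write it out in full. First, the step ``averaging produces an invariant positive definite form, and a short linear-algebra argument identifies it with $B$ up to scaling'' requires irreducibility of the geometric representation, hence a prior reduction to connected graphs; a disconnected graph yields a direct product of the groups attached to its components, which is also why the classification is properly a statement about \emph{irreducible} finite Coxeter systems (a general finite Coxeter group is a direct product of groups from the list, not literally one of them). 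This reduction and the Schur-type uniqueness of the invariant form are where Humphreys' Sections 6.1--6.4 do the real work. Second, your uniqueness argument (``distinct graphs give non-isomorphic groups'') fails as stated because of the low-rank coincidences $I_2(3)\cong A_2$ and $I_2(4)\cong B_2$ (and $I_2(2)\cong A_1\times A_1$ is reducible); the usual convention restricts to $I_2(m)$ with $m\geq 5$ precisely so that ``exactly one'' holds. Neither issue is fatal, but both are places where a blind write-up would stumble.
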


Note that each of the groups listed in the previous theorem are defined by a Coxeter graph and corresponding Coxeter presentation.

Given a Coxeter presentaion of a group $C(\Gamma)$, the Coxeter rank $r(C(\Gamma))$ is known to equal the number of vertices of $\Gamma$, see for example Lemma 2.1 in~\cite{felikson2010reflection}. In this paper, we are interested in all Coxeter presentations of finite Coxeter groups with Coxeter rank 3, 4, or 5, such that the set of reflections form a single conjugacy class. On the one hand, it was shown in~\cite{blair2020wirtinger} that all knots in the knot table up to 16 crossings have bridge number at most 5. On the other, knots of bridge number less than $3$ are classically known to admit MRCQ; indeed all two-bridge knots have dihedral quotients. 
Therefore, the Coxeter groups needed in order to find MRCQs for the remaining cases are those with Coxeter rank 3, 4, or 5. Additionally, all meridians in the fundamental group of a knot exterior are conjugate. Hence, if $\rho: \pi_1(S^3\backslash K)\twoheadrightarrow G$ is a maximal rank Coxeter quotient, then the reflections of $G$ form a single conjugacy class. The groups in Theorem \ref{CoxeterClassification} that meet these criteria are $A_3$, $A_4$, $A_5$, $D_4$, $D_5$, $H_3$, and $H_4$. However, to be sure that our search is exhaustive, we need to take into account all Coxeter {\it presentations} of finite Coxeter groups, since, in order to detect the existence of a surjective homomorphism onto $G$, we need to consider all possible minimal generating sets for $G$ within the specified conjugacy class of reflections.

\begin{thm}
Given a finite Coxeter group $C(\Gamma)$ such that the reflections of $C(\Gamma)$ form a single conjugacy class, then $\Gamma$ is of type $A, B, D, E, F, G, H,$ or $I$.
\end{thm}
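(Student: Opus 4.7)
The plan is to reduce the statement to the classification of finite Coxeter groups (Theorem~\ref{CoxeterClassification}) by using the single conjugacy class hypothesis to force the Coxeter graph $\Gamma$ to be connected. Since the letter types appearing in the conclusion are precisely those appearing in the standard classification of connected (irreducible) finite Coxeter graphs, the connectedness reduction is the only real content.

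First I would handle the connectedness step. Suppose for contradiction that $\Gamma=\Gamma_1\sqcup\Gamma_2$ is a disjoint union of two nonempty labeled graphs. Since $\Gamma$ has no edges between $\Gamma_1$ and $\Gamma_2$, the defining relations of $C(\Gamma)$ contain no braid relations mixing the two vertex sets, and the usual argument on Coxeter presentations gives a product decomposition $C(\Gamma)\cong C(\Gamma_1)\times C(\Gamma_2)$, with the generators in $\Gamma_i$ corresponding to $(s,1)$ or $(1,s)$ in the product. Conjugation in a direct product acts componentwise, so the conjugacy class of $(s,1)$ for a generator $s$ of $C(\Gamma_1)$ is contained in $C(\Gamma_1)\times\{1\}$ and therefore cannot contain any $(1,s')$ with $s'$ a generator of $C(\Gamma_2)$. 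Thus the reflections of $C(\Gamma)$ split into at least two conjugacy classes, contradicting the hypothesis. Hence $\Gamma$ is connected.

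Once connectedness is established, I would simply invoke Theorem~\ref{CoxeterClassification}: every connected (equivalently, irreducible) finite Coxeter graph is isomorphic to one of $A_n$, $B_n$, $D_n$, $E_6$, $E_7$, $E_8$, $F_4$, $H_3$, $H_4$, or $I_2(m)$, which are precisely the letter types listed in the conclusion. I expect the only mild obstacle in writing the argument is cleanly justifying the direct-product decomposition from the disjoint-union graph; this is the standard Coxeter-theoretic fact that adjoining commutativity relations between two disjoint generator sets yields the direct product of the two Coxeter groups, so one must cite it or give a one-line check on presentations.
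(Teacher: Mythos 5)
Your argument is correct, and it is worth noting that the paper itself does not write out a proof at all: it simply asserts that the statement ``follows from the proof of the classification of finite Coxeter groups'' and cites Sections 2.7 and 6.4 of Humphreys. Your reduction --- single conjugacy class of reflections forces the Coxeter system to be irreducible, after which the classification of irreducible finite Coxeter diagrams gives the list of types --- is exactly the standard argument that citation is standing in for, and the componentwise-conjugation observation in the direct product is the right way to split the reflections into at least two classes. Two small points to watch. First, under the paper's stated convention a Coxeter graph carries explicit weight-$2$ edges and an \emph{absent} edge imposes no relation at all, so a literally disconnected $\Gamma$ would give a free product (hence an infinite group, contradicting finiteness immediately); the decomposition you want is with respect to the subgraph of edges of weight greater than $2$, i.e.\ you should assume all edges between $\Gamma_1$ and $\Gamma_2$ have weight $2$, which is what yields $C(\Gamma)\cong C(\Gamma_1)\times C(\Gamma_2)$ and matches the standard notion of irreducibility. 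Second, the paper's Theorem~\ref{CoxeterClassification} as literally stated classifies finite Coxeter \emph{groups} up to abstract isomorphism, whereas your final step needs the slightly stronger diagram-level statement that every irreducible finite Coxeter \emph{system} has graph of one of the listed types; that stronger form is precisely what the Humphreys reference supplies, so you should cite it (or the classification of positive-definite Coxeter graphs) rather than the group-isomorphism version. With those adjustments your proof is complete and, unlike the paper, self-contained modulo the classification.
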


This stronger statement follows from the proof of the classification of finite Coxeter groups (see, for example, Sections 2.7 and 6.4 of Humphreys \cite{Humphreys1}). As a consequence of this theorem and our previous observations, every Coxeter presentation of a finite Coxeter group with Coxeter rank 3, 4, or 5 such that the set of reflections form a single conjugacy class is one of $A_3$, $A_4$, $A_5$, $D_4$, $D_5$, $H_3$, and $H_4$. Thus, we can restrict to these presentations when implementing our search for all MRCQs from knot groups to finite Coxeter groups for tabulated knots through 16 crossings.

\subsection{Cappell and Shaneson's Meridional Rank Conjecture}

Recall that a meridian of a link $L$ is a based loop $m: S^1\to  S^3\backslash L$ which is freely homotopic to the boundary of an embedded disk $D^2\hookrightarrow S^3$ intersecting $L$ transversally once. The {\it meridional rank}  $\mu(L)$ is the smallest number of elements of $\pi_1(S^3\backslash L)$ represented by meridians which suffice to generate the group. The {\it bridge number} $\beta(L)$ is the minimal number of local maxima of $L$ with respect to the standard height function $h: \mathbb{R}^3\to \mathbb{R}$, taken over all embeddings $l: \coprod S^1\hookrightarrow \mathbb{R}^3$ isotopic to $L$ for which $h_{|l}$ is Morse. One readily derives from the Wirtinger presentation of $\pi_1(S^3\backslash L)$ that the inequality $\beta(L)\geq \mu(L)$ holds for all links, since the meridians near the local maxima of $L$ are seen to generate the group. Cappell and Shaneson asked if the two invariants in fact coincide.

\begin{conj}[MRC]
Let $L\subset S^3$ be a link. Are its bridge number and meridional rank equal?
\end{conj}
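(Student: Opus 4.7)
The plan is to separate the two inequalities in $\beta(L) = \mu(L)$. The easy direction $\beta(L) \geq \mu(L)$ is immediate from the Wirtinger presentation: reading off a meridian at each of the $\beta(L)$ local maxima of a minimal bridge presentation produces a meridional generating set of $\pi_1(S^3\backslash L)$. So the entire content of the conjecture lies in the reverse inequality $\mu(L) \geq \beta(L)$.

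The first strategy I would pursue mirrors the one exploited in this paper: given a minimal meridional generating set $m_1, \ldots, m_{\mu(L)}$ of $\pi_1(S^3\backslash L)$, construct a good quotient $\rho: \pi_1(S^3\backslash L) \twoheadrightarrow G$ onto a Coxeter group with $r(G) = \mu(L)$. Proposition~\ref{prop-equalities} would then force $\beta(L) = \mu(L)$. Concretely, one would send each $m_i$ to a distinct Coxeter generator and search among the pairwise products $\rho(m_i)\rho(m_j)$ for finite-order relations compatible with the defining relations of the knot group. The delicate part is ensuring that enough such relations exist to yield a finite (or at least detectable) $G$, while preventing the Coxeter rank from collapsing below $\mu(L)$.

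An alternative, more geometric approach would induct on a complexity measure of $L$, such as a tangle decomposition. This has succeeded for torus, two-bridge, Montesinos, and arborescent links, and one might hope to extend the inductive framework to all arborescent links and then to hyperbolic knots via JSJ-style decompositions. Each inductive step would require showing that $\mu$ behaves like $\beta$ under the chosen decomposition: in particular, $\mu$ should be additive under connected sum and well-behaved under prime tangle replacement.

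The hard part will be the existence portion of either construction. In the Coxeter setting, it is entirely possible that certain knots admit no finite Coxeter quotient of rank $\mu(L)$; indeed, the present paper detects only $601{,}061$ such knots out of $1{,}701{,}936$ through sixteen crossings, suggesting that finite Coxeter quotients alone cannot resolve the conjecture in general. In the tangle-decomposition setting, no complexity measure compatible with both invariants is known for arbitrary hyperbolic knots with complicated JSJ structure. These obstructions are what have kept the conjecture open since Cappell and Shaneson posed it, and any complete proof will likely require a substantive new idea beyond the two strategies sketched above.
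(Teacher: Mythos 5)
The statement you were asked about is Cappell and Shaneson's Meridional Rank Conjecture, which is an \emph{open problem}; the paper does not prove it and neither does anyone else. The paper only \emph{verifies} the conjecture for roughly $80\%$ of prime knots through 16 crossings by exhibiting, knot by knot, a finite Coxeter quotient whose rank matches the Wirtinger number of a diagram (Proposition~\ref{prop-equalities}). So there is no ``paper's own proof'' to compare against, and your proposal, by its own admission in the final paragraph, is not a proof either: it correctly disposes of the easy inequality $\beta(L)\geq\mu(L)$ and then sketches two strategies for the reverse inequality without carrying either one out.

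The concrete gap is the entire content of the conjecture. Your first strategy --- sending a minimal meridional generating set to Coxeter generators and hoping the knot group relations descend to a rank-$\mu(L)$ Coxeter quotient --- has no existence argument: nothing guarantees that a compatible Coxeter quotient of the right rank exists, and the paper's own data (plus Ryffel's theorem quoted in the paper, showing many torus knots admit \emph{no} nontrivial Coxeter quotient) shows that this strategy provably fails for some knots, so it cannot yield the conjecture in full generality. Your second strategy (induction on tangle or JSJ decompositions) is a description of the known partial results, not an argument; the inductive step for a general hyperbolic piece is exactly what is missing from the literature. You have accurately summarized the state of the art, but a summary of why the problem is hard is not a proof, and the statement remains a conjecture.
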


We approach this question by studying an intermediate quantity, the {\it Wirtinger number} of $L$, defined in~\cite{blair2020wirtinger} via a combinatorial procedure we now recall. Let $D$ be a link diagram and let $\text{W}(D)$ be a subset of the set of strands in $D$. We will refer to the elements of $\text{W}(D)$ as colored strands. The data $(D, \text{W}(D))$ represents a {\it partially colored diagram}. Denote by $c$ a crossing in $D$ and by $o$, $u_1$ and $u_2$ the overstrand and the two understrands at $c$. When $\{o, u_1\}\subset \text{W}(D)$ and  $\{u_2\}\notin \text{W}(D)$, we say a {\it coloring move} can be performed at $c$, by setting  $\text{W}'(D):=\text{W}(D)\cup \{u_2\}$. We refer to the partially colored diagram $(D, \text{W}'(D))$ as the result of performing a coloring move on $\text{W}(D)$ at $c$ and we write $\text{W}(D)\longrightarrow \text{W}'(D)$. 

Let $|D|$ denote the number of crossings in $D$. A {\it complete coloring sequence} for $D$ consists of a collection of $n$ strands in $D$, $\{s_1, \dots, s_n\}:=\text{W}_1(D)$, together with $|D|-n$ coloring moves $$\text{W}_1(D)\longrightarrow \text{W}_2(D)\dots \longrightarrow \text{W}_{|D|-n}(D),$$ where ${W}_{|D|-n}(D)$  is the set of all strands in $D$. Each of the initial strands $s_i\in {W}_1(D)$ is called a {\it seed strand} or simply a {\it seed} for the sequence. When a complete coloring sequence exists starting with $\text{W}_1(D)$, we say that  the strands in $\text{W}_1(D)$ are a {\it generating set of seeds} for $D$. 

\begin{defn}\label{def-wirt}
	The {\it Wirtinger number} of a link diagram $D$, denoted $\omega(D)$, is the smallest integer $n$ such that there exist a generating set of seeds for $D$ with $n$ elements. The {\it Wirtinger number} of a link $L$, denoted $\omega(L)$, is the minimal value of $\omega(D)$ over all diagrams $D$ of $L$. 
\end{defn}

The motivation for this definition is straight-forward: the Wirtinger number of a diagram $D$ gives a combinatorial upper bound on the meridional rank of the corresponding link $L$. Indeed, a coloring move at a crossing $c$ corresponds to the fact that, together, the Wirtinger meridians of the overstrand $o$ and of the understrand $u_1$ generate the Wirtinger meridian of the second understrand $u_2$; this is immediate from the Wirtinger relation at $c$. Thus, if there exists a coloring sequence for $D$ starting from a collection of seeds  $\{s_1, \dots, s_n\}$, then the Wirtinger meridians of these strands generate the group of the link $L$ and, therefore, $\mu(L)\leq \omega (D)$. This inequality holds for any diagram $D$ of $L$, showing that, in fact,  $$\mu(L)\leq \omega (L).$$

On the other hand, the argument used previously to show that $\beta(L)\geq \mu(L)$ for any link can be used without modification to show that $\beta(L)\geq \omega(L)$.  Put differently, if the strands containing the local maxima of an embedding are chosen as seeds, a complete coloring sequence can be produced by extending the partial coloring successively at crossings of lower height.  Combining the above observations, we see that for any link $L\subset S^3$, $$\beta(L)\geq \omega(L) \geq \mu(L).$$

\begin{thm}[\cite{blair2020wirtinger}] Let $L\subset S^3$ be a link. Its Wirtinger number and bridge number are equal: $\omega(L)=\beta(L).$ \label{thm-wirt}
\end{thm}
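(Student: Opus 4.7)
Since the inequality $\beta(L) \geq \omega(L)$ is already established in the discussion preceding the theorem, the substantive content is the reverse inequality $\omega(L) \geq \beta(L)$. The plan is to convert a complete coloring sequence on a minimal diagram into an explicit bridge presentation of $L$ with the same number of bridges as seeds.

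Fix a diagram $D$ of $L$ realizing $\omega(L) = n$, with generating set of seeds $\{s_1, \dots, s_n\}$ and coloring moves $m_1, \dots, m_k$ applied in order at crossings $c_1, \dots, c_k$. I would construct an embedding of $L$ in $\mathbb{R}^3$ whose standard height function $h$ is Morse with exactly $n$ local maxima, each located on a seed strand. The heuristic is that the seeds should be realized as over-arches contributing one maximum each, while every non-seed strand is monotonic in $h$. At each move $m_j$, with overstrand $o$, already-colored understrand $u_1$, and newly colored $u_2$, I would arrange $u_2$ near $c_j$ to pass below both $o$ and $u_1$, in agreement with the over/under data at $c_j$. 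The coloring sequence thereby induces a partial ordering on strands that determines their relative heights, and the roles ``$u_2$ just colored'' versus ``already colored'' translate directly into ``placed just below'' versus ``already placed above''.

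The main obstacle is global consistency of the height assignment: a non-seed strand colored at step $j$ may subsequently appear as $o$ or $u_1$ at many later moves, each imposing a constraint at a different point along its length, and the naive rule ``depth equals height'' can fail to yield a monotone profile on a single strand. My plan to address this is induction on the number $k$ of coloring moves. In the base case $k = 0$, every strand of $D$ is a seed and $L$ admits a direct realization with one bridge per strand joined by disjoint planar underpass arcs, giving $\beta(L) \leq n$. For the inductive step, I would localize the construction near the last coloring move $m_k$ at $c_k$: the strand $u_2$ sits as a short arc adjacent to $c_k$, and an ambient isotopy supported in a neighborhood of $c_k$ can push it underneath the portions of $L$ already positioned at higher levels and merge it into the underpass structure built from $u_1$, extending the partial bridge presentation obtained from the first $k-1$ moves without creating new local maxima. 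Combining this construction with the easy direction yields $\omega(L) = \beta(L)$.
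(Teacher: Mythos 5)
This theorem is not proved in the present paper; it is imported wholesale from the cited reference, where the inequality $\beta(L)\leq\omega(L)$ is the main technical result. So the comparison here is between your sketch and that known construction. Your treatment of the easy direction $\beta(L)\geq\omega(L)$ matches the discussion preceding the theorem and is fine. For the hard direction you correctly identify the central obstacle --- that a single strand participates as overstrand or understrand at many crossings, so its height profile is constrained globally and cannot be read off from the one coloring move that colors it --- but the inductive scheme you propose does not overcome this obstacle; it relocates it. Concretely: (i) the first $k-1$ of the $k$ coloring moves do not form a complete coloring sequence for $D$ (the strand $u_2$ is still uncolored), so the inductive hypothesis does not apply to $D$ itself. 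The natural repair is to declare $u_2$ an additional seed, but then induction produces a position with $n+1$ maxima, and the inductive step must \emph{destroy} a maximum --- which is exactly the hard part, not a routine local move. (ii) The last-colored strand $u_2$ may be the overstrand at many crossings far from $c_k$, and its two underpass endpoints lie at two different crossings; an ambient isotopy supported in a neighborhood of $c_k$ cannot lower all of $u_2$ into the underpass structure without either violating the over/under data at those distant crossings or leaving a local maximum on $u_2$ away from $c_k$. ``Merging it into the underpass structure built from $u_1$'' presupposes that $u_2$ is already monotone outside a neighborhood of $c_k$, which is the global consistency you flagged and have not established.

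What the actual proof uses, and what is missing from your sketch, is a global structural consequence of the definition of a complete coloring sequence: each non-seed strand is colored exactly once, at one of its two terminal underpasses, from the adjacent strand $u_1$ there; hence the ``colored from'' relation partitions the strands of each link component into $n$ contiguous arcs, each containing exactly one seed and colored monotonically outward from that seed toward both ends of the arc. The bridge position is then built in one pass, not move by move: each such arc is realized with its unique maximum on the seed and descending toward both of its ends, with the heights of the underpass endpoints chosen according to the full coloring sequence, and the compatibility condition (overstrand above understrands at every crossing) is verified using the order in which strands are colored. Supplying that partition lemma and the verification of crossing compatibility is the substantive work; without it the proposal is a plan rather than a proof.
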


The meridional rank conjecture is thus equivalent to proving the inequality $\omega(L)=\mu(L)$ for all links. 
As outlined in the Introduction, one way to establish this equality is to exhibit a diagram $D$ of a link which admits a Coxeter quotient of rank $\omega(D)$. Our main result, Theorem~\ref{thm-main}, is identifying all diagrams $D$ of knots through 16 crossings whose groups admit quotients of rank $\omega(D)$ onto finite Coxeter groups.  When a knot $K$ has this property, we conclude, as in Proposition~\ref{prop-equalities}, that $$\omega(D)\geq \beta(K)\geq \mu(K)\geq \omega(D).$$  

A Coxeter quotient of a knot group can be described in any diagram of the knot, as reviewed next. Conversely, the existence of a quotient can be diagrammatically detected; see Lemma~\ref{lem-AnyDiagram}. 

\subsection{Knot colorings and knot group quotients}

One of the early methods for distinguishing knots is via Fox $p$-colorings of their diagrams. Let $D$ be a diagram of a link $L$. A $p$-coloring of $D$ is an assignment $f(s)\in \{1, \dots, p\}$ for each strand $s$ in $D$, subject to the condition that at every crossing in $D$ the relation 
\begin{equation} \label{equation-Fox}
	f(u_1)+f(u_2)-2f(o)\equiv 0\mod p
\end{equation} holds, where $o$ is the overstrand and $u_1, u_2$ the two understrands. Let $D_p=\langle x, y| x^2=y^p=1, xyxy=1\rangle$ denote the dihedral group of order $2p$. A Fox $p$-coloring defines a homomorphism $\varphi: \pi_1(S^3\backslash L)\to D_p$ by mapping the Wirtinger meridian $m_s$ of a strand $s$ to a reflection in $D_p$ determined by $f$: $$\varphi[m_s]:=xy^{f(s)}.$$ Given a crossing in $D$, Equation~(\ref{equation-Fox}) guarantees that the Wirtinger relation among the meridians at this crossing is satisfied by the images of these meridians under $\varphi$. The assignment of integers mod~$p$ to each strand in a link diagram $D$ determines a group homomorphism if and only if the equation is satisfied at every crossing in $D$.

When a knot or link admits many distinct Fox $p$-colorings for a fixed $p$, the number of such colorings can be used to derive a lower bound on its meridional rank. However, the existence of a {\it single} homomorphism onto a given dihedral group can only prove that the meridional rank of a knot is bigger than one, since two reflections suffice to generate the image. For the purpose of studying MRC, it is therefore more helpful  to find quotients from knot groups to groups which require many generators in a fixed conjugacy class (for link groups, conjugacy classes). We employ finite Coxeter groups to this end. See Section~\ref{section-coxeter review} for a definition and quick review of the properties and classification of these groups. We will make extensive use the fact that homomorphisms from a link group to any group can be described diagrammatically, just like Fox colorings.

\begin{defn} \label{def-coherent}
Let $G$ be a group and let $D$ be a diagram of an oriented link $L$. Denote by $s(D)$ the set of strands in $D$. A {\it $G$-coloring of $D$} is 
	a map $$r: s(D)\to G$$
	$$s_i\mapsto g_{s_i}$$ such that for any crossing $c$ in $D$ with overstrand $s_i$ and understands $s_j$ and $s_k$, the relation holds: 
	\begin{equation}\label{crossing-relation}
	g_{s_i}g_{s_j}g_{s_i}^{-1}=g.
	\end{equation}
\end{defn}
In order to pass from a $G$-coloring of a diagram of $L$ to a homomorphism $\pi_1(S^3\backslash L)\to G$, we map the Wirtinger meridian of any strand $s$, with the orientation determined by the orientation of $L$, to the element $g_s\in G$.
\begin{lem}
	\label{lem-AnyDiagram}
	Let $D$ be a diagram of a link $L$ and $G$ a Coxeter group. There exists a good quotient  $\varphi: \pi_1(S^3\backslash L)\twoheadrightarrow G$ mapping meridians of $L$ to reflections in $G$ if and only if $D$ admits a $G$-coloring by reflections.
\end{lem}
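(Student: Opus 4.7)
The plan is to pass between the two sides via the Wirtinger presentation of the link group. Recall that $\pi_1(S^3\setminus L)$ is generated by one meridian $m_s$ for each strand $s$ of $D$, with a single Wirtinger relation at each crossing $c$: if $s_i$ is the overstrand and $s_j, s_k$ the two understrands at $c$ (ordered according to the orientation of $L$ and the sign of $c$), then $m_{s_i} m_{s_j} m_{s_i}^{-1} = m_{s_k}$. This is exactly the form of relation~\eqref{crossing-relation} appearing in Definition~\ref{def-coherent}, and that observation will drive both implications.

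For the forward direction, suppose a good quotient $\varphi : \pi_1(S^3\setminus L) \twoheadrightarrow G$ is given. I would define $r: s(D) \to G$ by $r(s) := \varphi(m_s)$. By the hypothesis that $\varphi$ is good, each $r(s)$ is a reflection, so $r$ is a coloring by reflections. Applying $\varphi$ to the Wirtinger relation at an arbitrary crossing immediately yields equation~\eqref{crossing-relation}, so $r$ is a $G$-coloring of $D$.

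For the converse, given a $G$-coloring $r$ by reflections I would build the homomorphism $\varphi$ from the Wirtinger presentation. Sending each generator $m_s$ to $r(s)$ defines a map on the free group on the $m_s$; equation~\eqref{crossing-relation} shows that this map sends each Wirtinger relator to the identity, so it descends to a homomorphism $\varphi : \pi_1(S^3\setminus L) \to G$. By construction $\varphi$ carries each meridian $m_s$ to the reflection $r(s)$. To obtain the ``good quotient'' surjection $\twoheadrightarrow G$, one restricts to (or stipulates) the case in which the image reflections $\{r(s) : s \in s(D)\}$ generate $G$; equivalently, one interprets ``admits a $G$-coloring'' as ``admits a $G$-coloring whose colors generate $G$.'' The subgroup generated by the colors is precisely the image of $\varphi$, so this is the only additional condition needed.

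The one technical point that deserves care is the orientation convention at each crossing, since the precise form of the Wirtinger relation depends on the sign of $c$ and the orientation of the understrands. Because the coloring takes values in reflections, every $r(s)$ is an involution: $r(s)^{-1} = r(s)$. This makes the conjugation in~\eqref{crossing-relation} insensitive to the sign of the crossing and to which of the two understrands is labeled $s_j$ versus $s_k$, so the bijection between Wirtinger relations and instances of~\eqref{crossing-relation} goes through uniformly. I expect this involutivity check, together with being explicit about the generating hypothesis required to upgrade a homomorphism to a surjection, to be the only subtle part of the argument; the rest is a direct translation between the diagrammatic and presentation-theoretic descriptions of the same map.
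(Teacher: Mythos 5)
Your proof is correct and follows essentially the same route as the paper's: read off the coloring from a good quotient via $r(s)=\varphi(m_s)$, and conversely use the Wirtinger presentation to extend a coloring to a homomorphism, with Equation~\eqref{crossing-relation} matching the Wirtinger relators. You are in fact slightly more careful than the paper on two points it glosses over --- the surjectivity of the resulting homomorphism (which does require the colors to generate $G$) and the insensitivity of the relation to crossing sign because reflections are involutions --- but these are refinements of the same argument, not a different approach.
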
 

\begin{proof}
Let $\mu_{s_i}$ denote the Wirtinger meridian of the strand $s_i$, with the orientation induced by the given orientation on $L$. Given a good quotient $\varphi$, we define a $G$-coloring of $D$ by setting $r(s_i)=\varphi(\mu_{s_i})$. Since $\varphi$ is a good quotient, $\varphi(\mu_{s_i})$ is a reflection for each $s_i\in s(D)$, so $D$ admits a $G$-coloring by reflections.

For the converse, denote by $r: s(D)\to G$ a given $G$-coloring of $D$ by reflections. As above, we define a corresponding map $\rho: \{m_{s} | s \text{ a strand in } D \} \to G$ by setting $\rho(m_{s})=g_{s}$, where $m_{s}$ is the Wirtinger meridian of the srand $s_i$.  Since the Wirtinger meridians in a diagram generate the link group, the assignment $\rho$ extends to a map $\rho: \pi_1(S^3\backslash L)\to G$. As in the case of Fox colorings, Equation~\ref{crossing-relation} guarantees that this map is a homomorphism. All meridians of $L$ are conjugate to Wirtinger meridians, and therefore map to reflections in $G$. Hence, a $G$-coloring of $D$ induces a good quotient $\rho: \pi_1(S^3\backslash L)\to G$.
\end{proof}

This well-known lemma is included to highlight the fact that, if $D$ is a diagram of a link $L$ and $D$ does not admit a coherent labeling by reflections in a Coxeter group $G$, then no homomorphism $\pi_1(S^3\backslash L)\twoheadrightarrow G$ exists mapping meridians of $L$ to reflections. The ability to work with {\it any} diagram of $L$ is a useful counterpoint to results which establish the meridional rank conjecture under the assumption that there exists a diagram with certain preferred properties. 

When a $G$-coloring of $D$ induces a good maximal rank Coxeter quotient to $G$, we say the $G$-coloring of $D$ is a diagrammatic MRCQ. We will be performing exhaustive searches for such quotients, using the following observation. 

\begin{rem}\label{rem:seeds-suffice}
	Let $D$ be a diagram for a link $L$ and let $s$ denote a generating set of seeds for $D$. Assume $L$ admits a quotient $\rho: \pi_1(S^3\backslash L)\to G$. In order to define this quotient, it suffices to determine the images under $\rho$ of Wirtinger meridians of the strands in $s$.This partial coloring will extend uniquely to a $G$-coloring of~$D$. 
\end{rem}

A class of link diagrams admiting natural Artin and Coxeter colorings was discovered by Brunner~\cite{brunner1992geometric}. The corresponding links were later called {\it twisted}. A link $L$ is twisted if it admits a diagram $D$, reduced in the sense of~\cite{brunner1992geometric}, with the following property: checkerboard color the complementary planar regions of $D$ in such a way that the unbounded region is ``white"; view the  ``black" as a union of disks and twisted bands\footnote{Again, the surface is assumed to be reduced, which means that each disk is incident to at least 3 bands (otherwise the disk becomes absorbed into a band) and the crossings in each band have the same sign.}; this surface contains at least one full twist in each band. For example, standard diagrams of pretzel knots are twisted when every parameter of the pretzel is least 2 in absolute value. See Figure~\ref{fig-pretzel} for an example of a twisted diagram. 

Given a twisted link $L$, Brunner showed how to define a quotient of $\pi_1(S^3\backslash L)$ onto an Artin group $G$, by labeling the strands in a twisted diagram $D$ of $L$ with appropriate elements of $G$. In Figure~\ref{fig-pretzel}, a twisted diagram is labeled by elements of an Artin group, following Brunner's construction.  A generating set for the group is in bijection with the planar regions in the complement of the twisted surface determined by $D$. The relations in the group are determined by the number of crossings in each twisted band of $D$.

It is convenient to replace $G$ by its natural Coxeter quotient, where the relation $x^2=1$ is added for each of the Artin generators. This will allow us to disregard orientations (since a reflection is equal to its inverse) and to make use of results like the classification of finite Coxeter groups.

\begin{figure}
\begin{center}\labellist
\small
 \pinlabel $a$ at 220 524
  \pinlabel $a$ at 220 580
   \pinlabel $b$ at 245 526
  \pinlabel $b$ at 245 570
   \pinlabel $c$ at 288 522
  \pinlabel $c$ at 288 574
\endlabellist
\includegraphics[height=2.6in, width=2.3in]{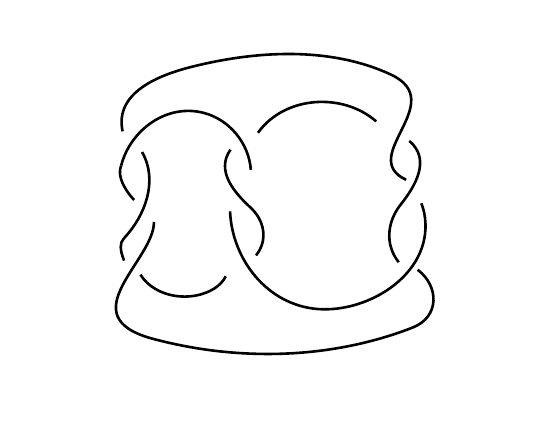}
\caption{The pretzel knot $P(3,-3,3)$, together with a quotient onto the Artin group  $\langle a, b, c |(ab)^3=(bc)^3=(ac)^3=1\rangle$ or 
the Coxeter group $\langle a, b, c | a^2=b^2=c^2= 1, (ab)^3=(bc)^3=(ac)^3=1\rangle$. 
\label{fig-pretzel}
}
\end{center}
\end{figure}

For a link $L$ presented in a twisted diagram $D$, in order to describe a quotient of the group of L, it suffices to determine the images of the two meridians at one end of each twist region in $D$. The images of the remaining meridians under this quotient will be determined by the Wirtinger relations at crossings. Again, refer to  Figure~\ref{fig-pretzel} for an explicit example of this general principle. Brunner's idea is to assign matching generators at the two ends of every twist region. This forces certain Coxeter relations in the quotient, determined by the number of crossings in each of the twist regions. 

We now turn to the rank of the quotient in Brunner's construction. As previously noted, Coxeter generators are in bijection with the regions in the complement of a twisted surface bounded by $L$. Thus, the number of such regions is a lower bound for the meridional rank of $L$. Using the Wirtinger number, matching upper bounds on the bridge numbers were found, which proved MRC for twisted links~\cite{baader2021coxeter}. A similar technique was applied beyond those links for which Brunner found diagrammatic Coxeter quotients, for example to Montesinos links and other natural infinite families of arborescent links~\cite{baader2021coxeter, baader2020twigs}, proving the MRC in these cases as well.

Two-bridge knots are a class of examples which illustrate the limitations of working with twisted diagrams. As noted above, the meridional rank of a 2-bridge knot is always detected in a maximal Coxeter quotient, namely by a quotient onto a dihedral group. However, checkerboard-coloring the diagram of a two-bridge knot produces a twisted surface in few cases. An exhaustive search for Coxeter quotients can prove more effective in practice than one which relies on the existence of a diagram with certain favorable properties.

\section{Homomorphism search}\label{section-homsearch}

\subsection{Summary of the algorithm}
The algorithm used for obtaining our results takes as input the Gauss code $G$ of a knot diagram $D_G$ representing a knot $K_G$. Following the algorithm developed in~\cite{blair2020wirtinger} and available at~\cite{paul2018}, the Gauss code is translated into the following data associated to $D_G$: a set of strands, denoted $S_G=\{s_1,s_2,...,s_j,...,s_n\}$; and a set of crossings, denoted  $C_G=\{c_1,c_2,...,c_j,...,c_m\}$. Next, the algorithm from~\cite{paul2018} is run to calculate the Wirtinger number of $D_G$ and to identify a minimal set of seed strands $E_G=\{e_1,\;e_2,...,\;e_j,...,\;e_{ \omega (D_G)}\}$. See Figure~\ref{fig-Gauss} for an example of how the seeds strands in $D_G$ are recorded in terms of $G$.

\begin{figure}
\labellist
\small
 \pinlabel $\bullet$ at 87 123
 \pinlabel $1$ at 108 120
 \pinlabel $e_1$ at 63 130
 \pinlabel $e_2$ at 148 130
 \pinlabel $e_3$ at 106 84
\endlabellist
\includegraphics[width=3in, height=2in]{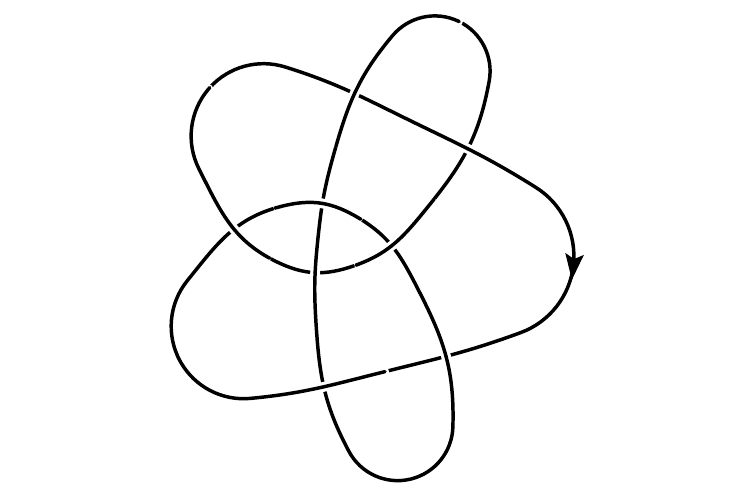}
\caption{A diagram of the knot $8_{16}$ with Gauss code $\{-1, 2, -3,4, -8, 6, -7, 3, -4,$ $5, -6, 1, -2, 7, -5, 8\}$ and seed strands $\{e_1, e_2, e_3\} = \{(-8,5,-1), (-6,1,-2), (-8,6,-7)\}$.} \label{fig-Gauss}
\end{figure}

Let $r_1, r_2,...,r_n$ be a minimal generating set of reflections for a Coxeter group $H$ with Coxeter rank $n=\omega (D_G)$ for some fixed knot diagram $D_G$. As observed in Remark~\ref{rem:seeds-suffice}, coloring the strands in $E_G$ by elements of $H$ suffices to determine the image of all of $\pi_1(S^3\backslash K_G)$ under a (potential) homomorphism to $H$. Fix a bijective map from $r_1, r_2,...,r_n$ to $E_G$. Since $E_G$ is a generating set of seeds,  by repeatedly applying the Wirtinger relations at crossings, this partial coloring can be extended to an assignment of reflections in $H$ to all strands of $D_G$. 
In case this assignment constitutes a coherent $H$-coloring of $D_G$, there exists a maximal rank Coxeter quotient from the knot group $K_G$ to $H$. By Proposition~\ref{prop-equalities}, such a homomorphism implies that $\omega (D_G)$ is equal to the meridional rank of $K_G$ and to the bridge number of $K_G$. 

Given a finite Coxeter group $H$, let $R(H)$ denote the set of all reflections in $H$ and let $Gen(H)$ be the set of all minimal generating sets of reflections for $H$. If $\omega (D_G)$ equals the Coxeter rank $r(H$), a brute force method of searching for good quotients for $K_G$ to $H$ would be to check, for every set $R$ in $Gen(H)$, whether every possible bijection from $R$ to $E_G$ extends to an $H$-coloring of $D_G$.  This can be done as follows. Start with a bijection from $R$ to a generating set of seeds in $D_G$, and sequentially extend this partial $H$-coloring using the Wirtinger relations at crossings. Since the process started with a generating set of seeds, it is guaranteed to result in assigning an element of $H$ to each strand in $D_G$. Once every strand of $D$ has been labeled by an element of $H$, check whether the images of the strands under this potential homomorphism satisfy the Wirtinger relations at those crossings which have not been used to create the coloring. When this is the case, the initial assignment defines a quotient from $K_G$ to $H$. See Figure~\ref{fig-fig8}.  

\begin{figure}
\labellist
\small
 \pinlabel $(12)$ at 222 600
 \pinlabel $(23)$ at 261 600
 \pinlabel $(12)$ at 358 600
 \pinlabel $(23)$ at 396 600
 \pinlabel $(13)$ at 300 495
 \pinlabel $(12)$ at 290 600
 \pinlabel $(23)$ at 329 600
 \pinlabel $(13)$ at 367 495
 \pinlabel $(23)$ at 367 360
\endlabellist
\includegraphics[width=14cm, height=3.8 cm]{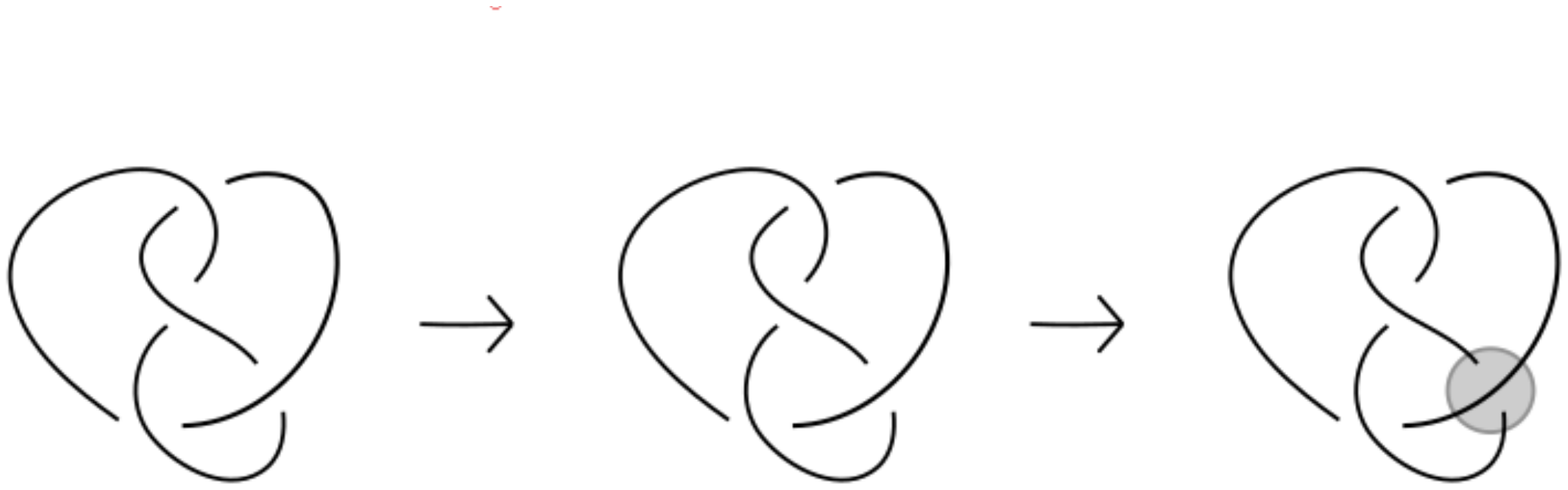}
\caption{Left: a bijection between a pair of seed strands for the given diagram and the generating set $\{(12), (23)\}\subset S_3$. Middle: extending the labeling by applying the Wirtinger relation at a crossing. Right: extending the labeling at a second crossing, then performing a check. The original bijection does not extend to a homomorphism from the group of the Figure-8 knot to $S_3$, as evidenced by the shaded crossing.} \label{fig-fig8}
\end{figure}

However, this is computationally intensive for the larger finite Coxeter groups. It is also highly redundant because, in general, many generating sets will be related by inner automorphisms of the group. Taking this redundancy into account, for larger Coxeter groups $H$ we implemented a preprocessing step in which we found a smaller subset of $Gen(H)$ which suffices for an exhaustive search. 

Given a Coxeter group $H$, define an equivalence relation on the set $Gen(H)$ by declaring $$\{r_1,r_2,...,r_{r(H)}\} \sim \{\rho_1,\rho_2,...,\rho_{r(H)}\}$$ if there exists $g\in H$ such that $\{g^{-1}r_1 g,g^{-1} r_2 g,...,g^{-1} r_{r(H)} g\}= \{\rho_1,\rho_2,...,\rho_{r(H)}\}$. We say a subset $A\subset Gen(H)$ is \emph{robust} if it contains at least one element from each equivalence class corresponding to the relation~``$\sim$". 

\begin{lem}\label{robust}
Let $G$ be Gauss code for a knot diagram $D$, $E_G$ a minimal set of seed strands for $D_G$, and $A\subset Gen(H)$ a robust set for a Coxeter group $H$. There exists a diagrammatic MRCQ of $D_G$ onto $H$ if and only if there exists $\{\rho_1, \rho_2,...,\rho_n\}\in A$ and a bijection of $\{\rho_1, \rho_2,...,\rho_n\}$ to $E_G$ that can be extended to an $H$-coloring of $D_G$.
\end{lem}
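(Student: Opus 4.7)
The plan is to prove both directions via the observation that the conjugation action of $H$ on itself carries $H$-colorings to $H$-colorings. I would phrase the argument as follows.

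For the forward direction, suppose $D_G$ admits a diagrammatic MRCQ onto $H$. By Lemma~\ref{lem-AnyDiagram}, we obtain an $H$-coloring $r\colon s(D_G)\to H$ by reflections whose induced map $\varphi\colon\pi_1(S^3\setminus K_G)\twoheadrightarrow H$ is surjective of rank $r(H)=\omega(D_G)=n$. Since $E_G$ is a generating set of seeds, the image $\varphi(\pi_1(S^3\setminus K_G))$ equals the subgroup generated by $\{r(e_1),\ldots,r(e_n)\}$; hence these colors form a minimal generating set of reflections, i.e.\ an element of $Gen(H)$. By robustness of $A$, there exist $\{\rho_1,\ldots,\rho_n\}\in A$ and $g\in H$ with $\{g^{-1}r(e_1)g,\ldots,g^{-1}r(e_n)g\}=\{\rho_1,\ldots,\rho_n\}$. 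I would then define a conjugated coloring $r'(s):=g^{-1}r(s)g$, verify it is again an $H$-coloring by reflections (the crossing relation $g_{s_i}g_{s_j}g_{s_i}^{-1}=g_{s_k}$ is preserved by conjugation, and reflections form a conjugacy class in the groups of interest), and read off the desired bijection $\{\rho_1,\ldots,\rho_n\}\to E_G$ from the values of $r'$ on the seeds. By Remark~\ref{rem:seeds-suffice}, this bijection extends uniquely to $r'$.

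For the backward direction, suppose $\{\rho_1,\ldots,\rho_n\}\in A$ and a bijection to $E_G$ extend to an $H$-coloring $r$ of $D_G$ by reflections. Lemma~\ref{lem-AnyDiagram} produces a good quotient $\varphi\colon\pi_1(S^3\setminus K_G)\to H$. Since $\varphi$ hits $\{\rho_1,\ldots,\rho_n\}$ via the Wirtinger meridians of the seeds, and this set generates $H$ by virtue of lying in $Gen(H)$, the map $\varphi$ is surjective. The rank of $H$ is $n=|E_G|=\omega(D_G)$; combining Theorem~\ref{thm-wirt} with Equation~\eqref{ineqs1} gives $\omega(D_G)=\beta(K_G)\geq r(H)=n=\omega(D_G)$, so $\varphi$ is a MRCQ and $r$ is a diagrammatic MRCQ.

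The only substantive content is the verification that conjugating every strand-color by a fixed $g\in H$ preserves the crossing relations in Definition~\ref{def-coherent}, which is a one-line algebraic check. The main thing to get right is the bookkeeping: the extension from seed-colors to a full $H$-coloring is unique (by the coloring-move procedure applied to the generating seed set), so the conjugated coloring $r'$ is forced to agree globally with the conjugate of $r$, rather than having to be constructed independently. I do not anticipate any conceptual obstacle; the lemma is essentially a restatement of the fact that $H$-colorings of a fixed diagram form a set on which $H$ acts by coordinate-wise conjugation, and an exhaustive search up to this action suffices.
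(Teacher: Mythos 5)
Your proposal is correct and follows essentially the same route as the paper: both directions hinge on passing the $H$-coloring obtained from Lemma~\ref{lem-AnyDiagram} through the inner automorphism supplied by robustness (the paper writes this as post-composing the quotient with conjugation by $g$, you write it as conjugating the strand-colors, which is the same thing). If anything, your write-up of the backward direction is slightly more careful than the paper's, since you explicitly check surjectivity and the rank equality rather than just citing Lemma~\ref{lem-AnyDiagram}.
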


\begin{proof}

Suppose there exists $\{\rho_1, \rho_2,...,\rho_n\}\in A$ and a bijection of $\{\rho_1, \rho_2,...,\rho_n\}$ to $E_G$ that can be extended by repeatedly applying the Wirtinger relations at each crossing to an $H$-coloring of $D_G$. By Lemma \ref{lem-AnyDiagram}, there exists a maximal rank Coxeter quotient of $D_G$ to $H$. 

For the converse, suppose there exists a diagrammatic MRCQ $\phi$ of $D_G$ to $H$. Denote the elements of $E_G$ by $\{e_1,\;e_2,...,\;e_j,...,\;e_{ \omega (D_G)}\}$. By definition, $\phi$ is a good quotient so it maps $\{e_1,\;e_2,...,\;e_j,...,\;e_{ \omega (D_G)}\}$ to a set of reflections $\{r_1,\;r_2,...,\;r_j,...,\;r_{ \omega (D_G)}\}$ in $H$. Since $\phi$ is of maximal rank, $\omega (D_G)=r(H)$. Since $\{e_1,\;e_2,...,\;e_j,...,\;e_{ \omega (D_G)}\}$ is a generating set of seeds for $D_G$, the corresponding Wirtinger meridians form a generating set for the knot group. Consequently, $\{\phi(e_1),\;\phi(e_2),...,\;\phi(e_{ \omega (D_G)})\}$ generates the image of $\phi$. Since $\omega (D_G)=r(H)$, it follows that   
 $\{r_1,\;r_2,...,\;r_j,...,\;r_{ \omega (D_G)}\}$  
is a minimal generating set of reflections for $H$. Hence, $\{r_1,\;r_2,...,\;r_j,...,\;r_{ \omega (D_G)}\}\in Gen(H)$. Since $A$ is a robust subset of $Gen(H)$, then there exists $\{\rho_1,\;\rho_2,...,\;\rho_j,...,\;\rho_{ \omega (D_G)}\}\in A$ such that $\{\rho_1,\;\rho_2,...,\;\rho_j,...,\;\rho_{ \omega (D_G)}\}\sim \{r_1,\;r_2,...,\;r_j,...,\;r_{ \omega (D_G)}\}$. In particular, there exists $g\in H$ such that $\{g^{-1}r_1g,\;g^{-1}r_2g,...,\;g^{-1}r_{ \omega (D_G)}g\}=\{\rho_1,\;\rho_2,...,\;\rho_j,...,\;\rho_{ \omega (D_G)}\}$. If $\theta$ is the inner automorphism of $H$ given by conjugation by $g$, then $\theta \circ \phi$ is a maximal rank Coxeter quotient of $D_G$ to $H$ and there is a bijection from $\{\rho_1,\;\rho_2,...,\;\rho_j,...,\;\rho_{ \omega (D_G)}\}$ to $E_G$ that can be extended by repeatedly applying the Wirtinger relations at each crossing to an $H$-coloring of $D_G$.
\end{proof}

By Lemma~\ref{robust}, in order to verify the existence of a MRCQ of $D_G$ to $H$, it suffices to check whether any bijection from a set in $A$, a robust subset of $Gen(H)$, to a minimal set of seed strands of $D_G$  can be extended to an $H$-coloring of $D_G$. 

Given Gauss code $G$ of a knot diagram $D_G$ representing a knot $K_G$, we implemented the following steps to perform an exhaustive search for good homomorphisms from $D_G$ to a finite Coxeter group $H$.

\begin{enumerate}
\item $D_G$ is parsed into a set of strands $S_G$ and the algorithm from~\cite{paul2018} is used to find a minimal set of seeds $E_G=\{e_1,\;e_2,...,\;e_j,...,\;e_{ \omega (D_G)}\}\subset S_G$.
\item If $\omega (D_G)=r(H)$ and $A\subset Gen(H)$ is \emph{robust}, then for every $R\in A$ and every bijection $f:R\rightarrow E_G$ we test whether $f$ can be extended to an $H$-coloring of $D_G$.
\end{enumerate}

We can now prove the main result of this paper.

 \begin{proof}[Proof of Theorem~\ref{thm-main}]

Let $G$ be the Gauss code for a diagram $D_G$ of a knot. Let $H$ be a Coxeter group such that the Coxeter rank of $H$ is $\omega (D_G)$ and $D_G$ has a maximal rank Coxeter quotient $\rho$ to $H$. We need to verify that $\rho$ will be detected by our search. By Lemma \ref{lem-AnyDiagram}, the homomorphism $\rho$ induces an $H$-coloring of $D_G$. Applying our algorithm to $G$, we find a minimal set of seed strands $E_G=\{e_1,\;e_2,...,\;e_j,...,\;e_{ \omega (D_G)}\}\subset S_G$. The above $H$-coloring of $D_G$ induces a labeling of $E_G$ by reflections $\{r_1,\;r_2,...,\;r_j,...,\;r_{ \omega (D_G)}\}$. Since $E_G$ is a generating set for $\pi_1(S^3\setminus K)$ and the $H$-coloring of $D_G$ induces a MRCQ  from $D_G$ to $H$, 
 we know that $\{r_1,\;r_2,...,\;r_j,...,\;r_{ \omega (D_G)}\}$ is a generating set of $H$.  Given $\mathcal{A}\subset Gen(H)$ a robust set, there exists an element $\{\rho_1,\;\rho_2,...,\;\rho_j,...,\;\rho_{ \omega (D_G)}\}\in \mathcal{A}$ and a $g\in H$ such that $\{\rho_1,\;\rho_2,...,\;\rho_j,...,\;\rho_{ \omega (D_G)}\}= \{g^{-1}r_1g,\;g^{-1}r_2g,...,\;g^{-1}r_jg,...,\;g^{-1}r_{ \omega (D_G)}g\}$. Note that conjugating each label of $D$ by $g$ gives a new $H$-coloring of $D_G$  such that $E_G$ is labeled by $\{\rho_1,\;\rho_2,...,\;\rho_j,...,\;\rho_{ \omega (D_G)}\}$. Therefore, in its search through all bijections of the form $f:E_G\rightarrow A$, where $A$ is an element of the robust set $\mathcal{A}$, the algorithm will find a labeling of $E_G$ by the generators $\{\rho_1,\;\rho_2,...,\;\rho_j,...,\;\rho_{ \omega (D_G)}\}$ which induces an $H$-coloring of $D_G$. Thus, the algorithm will return a positive hit for a MRCQ to $H$. 

By construction, if the algorithm returns a positive hit for a maximal rank Coxeter quotient to $H$, then the Coxeter rank of $H$ is $\omega (D_G)$ and there is an $H$-coloring of $D_G$.
\end{proof}

\subsubsection{MRCQs to finite Coxeter groups among all knots up to 16 crossings}

We implemented the algorithm described above in Python and searched for all maximal rank Coxeter qutients to finite Coxeter groups among the $1,701,936$ prime knots~\cite{HTW} of crossing number less than or equal to 16. It was shown in~\cite{paul2018}  that all Gauss codes available in the census of these knots result in diagrams with Wirtinger number at most 5. Therefore, we designed the code to search for good homomorphisms to finite Coxeter groups of Coxeter rank at most 5. Moreover, the meridians of a knot group form a single conjugacy class. Additionally, the MRC is known for knots of Wirtinger number two~\cite{BZ89}, and generating suitable homomorphisms for the knot group of Wirtinger number two knots to dihedral groups is well-understood. As a result, our code searches for maximal rank Coxeter quotients to those finite Coxeter groups of Coxeter rank 3, 4 or 5 whose reflections constitute a single conjugacy class. As discussed in Section \ref{Coxeterbackground}, every Coxeter presentation for such a group is one of $A_3$, $A_4$, $A_5$, $D_4$, $D_5$, $H_3$, and $H_4$. In Section \ref{Sec:robust}, we discuss how we generated robust sets of generating sets for each of these groups.

\subsection{Generating Robust Sets}\label{Sec:robust}

To illustrate our approach, we outline the process we used to generate a robust set of generating sets for the group $D_4$. First, we represented $D_4$ as a subgroup of $GL_4(\mathbb{R})$, the general linear group of degree four. Specifically, $D_4$ is isomorphic to the subgroup generated by the following matrices:
$$ \begin{bmatrix}
-1 & 1 & 0 & 0\\
0 & 1&  0 & 0 \\
0 & 0 &  1 & 0 \\
0 & 0 &  0 & 1 
\end{bmatrix},  \begin{bmatrix}
1 & 0 & 0 & 0\\
1 & -1&  1 & 1 \\
0 & 0 &  1 & 0 \\
0 & 0 &  0 & 1 
\end{bmatrix}, \begin{bmatrix}
1 & 0 & 0 & 0\\
0 & 1 &  0 & 1 \\
0 & 1 &  -1 & 0 \\
0 & 0 &  0 & 1 
\end{bmatrix}, \begin{bmatrix}
1 & 0 & 0 & 0\\
0 & 1 &  0 & 1 \\
0 & 0 &  1 & 0 \\
0 & 1 &  0 & -1 
\end{bmatrix}$$
Since the reflections in $D_4$ are all contained in a single conjugacy class, we know that there exists a robust set for the group $D_4$ so that every generating set in the robust set contains the matrix $A=\begin{bmatrix}
-1 & 1 & 0 & 0\\
0 & 1&  0 & 0 \\
0 & 0 &  1 & 0 \\
0 & 0 &  0 & 1 
\end{bmatrix}$. Since $D_4$ contains a total of 12 reflections, if we fix the first reflection, this leaves $11\times 10\times 9=990$ sets of four distinct reflections that contain $A$. Recall that each reflection in $\mathbb{R}^4$ has a rank 1 eigenspace corresponding to the eigenvalue $-1$ and a rank 3 eigenspace corresponding to the eigen value $1$. For a set of four reflections to generate $D_4$ is must be that the set of four eigenvectors corresponding to the four one-dimensional eigenspaces associated to the $-1$ eigenvalues for each of the four reflections must span $\mathbb{R}^4$. We determined by direct computation that $630$ of the $990$ sets of reflections had this property.  We found that $624$ of these sets generated a group of order $192=|D_4|$ and that the remaining 6 groups generated $(\mathbb{Z}/2\mathbb{Z})^4$. Thus, this set of $624$ four-element sets of reflections is a robust set of generating sets for the group $D_4$. Naturally, smaller robust sets help reduce run time when searching for MRQCs across millions of knot diagrams. 

Building robust sets of generating sets for each of the groups $A_3$, $A_4$, $A_5$, $D_5$, $H_3$, and $H_4$ was done by an analogous method  For each group, computational resources devoted to building a small robust set were balanced against computational time saved by running the maximal rank Coxeter quotient search algorithm using a smaller robust set. For example, significant computational time was spent to generate small robust sets for $H_4$, and $D_5$. As in the example outlined above, we started by generating all sets of four (resp. 5) reflections containing a fixed preferred reflection. These sets were trimmed in two ways: first, sets that generated a proper subgroup were removed. Then, we implemented a brute force search that identified when two generating sets were related by an inner automorphism and deleted one of the redundant generating sets. Ultimately, we found a robust set of generating sets for $H_4$ that contained $25,224$ elements, down from $11,703,240$ sets before the trimming process, and a robust set of generating sets for $D_5$ that contained $1,778$ elements, down from $1,860,480$ sets before the trimming process. All robust sets generated are available here \cite{nate2022}.

\section{Computational findings}\label{section-tables}
In this section we organize our computational data into tables. Note that we found maximal rank Coxeter quotients for $595,515$, roughly $35\%$,  
of all $1,696,390$ knots of crossing number at most 16 that have minimal diagrams with Wirtinger number 3, 4 or 5. It is important to note that, the Wirtinger number detects all 5546 2-bridge knots of crossing number at most 16 and all knots with crossing number at most 16 have Wirtinger number at most 5~\cite{blair2020wirtinger}. Moreover, all 2-bridge knots admit a maximal rank Coxeter quotient to a finite dihedral group. Hence, exactly  $601,061$ of the $1,701,936$ prime knot diagrams with crossing number at most 16 admit a maximal rank Coxeter quotient to a finite Coxeter group. 

This work also verifies the MRC for a large portion of tabulated knots. Already, knots with diagrams of Wirtinger number 2 and 3 were known to satisfy the MRC~\cite{BZ89}. In addition, this computation establishes the MRC for $227,163$ knots with Wirtinger number 4 or 5. Altogether, the MRC has been verified for at least $1,363,137$, or approximately $80.1\%$, of all $1,701,936$ prime knots with crossing number at most 16. 

\begin{table}[ht]
\caption{Knots with maximal rank Coxeter quotients by group type}
\centering
\begin{tabular}{||c c c c c||} 
 \hline
 Crossing number & Prime knots & MRCQ to $A_3$, $A_4$, or $A_5$ & MRCQ to $D_4$, or $D_5$ & MRCQ to $H_3$, or $H_4$ \\ [0.5ex] 
 \hline\hline
 3 & 1 & 1 & 0 & 0 \\ 
 \hline
 4 & 1 & 0 & 0 & 0 \\
 \hline
 5 & 2 & 0 & 0 & 0 \\
 \hline
 6 & 3 & 1 & 0 & 0 \\
 \hline
 7 & 7 & 2 & 0 & 0 \\
 \hline
 8 & 21 & 7 & 0 & 0 \\
 \hline
 9 & 49 & 17 & 0 & 9 \\
 \hline
 10 & 165 & 39 & 0 & 40 \\
 \hline
 11 & 552 & 121 & 15 & 124 \\
 \hline
 12 & 2176 & 370 & 13 & 537 \\
 \hline
 13 & 9988 & 1772 & 316 & 2572 \\
 \hline
 14 & 46972 & 7069 & 1099 & 12494 \\
 \hline
 15 & 253293 & 37490 & 7997 & 66962 \\
 \hline
 16 & 1388705 & 183509 & 457923 & 363456 \\
 \hline
 Totals & 1701936 & 230398 & 55233 & 446194 \\ [1ex]
 \hline
\end{tabular}
\end{table}

\begin{table}[ht]
\caption{Prime knots with $\omega(D)=3$ which admit maximal rank Coxeter quotients}
\centering
\begin{tabular}{||c c c c c||} 
 \hline
 Crossing number & Knots with $\omega(D)=3$ & MRCQ to $A_3$ & MRCQ to $H_3$ & MRCQ to $A_3$ or $H_3$ \\ [0.5ex] 
 \hline\hline
 3 & 0 & 0 & 0 & 0 \\ 
 \hline
 4 & 0 & 0 & 0 & 0 \\
 \hline
 5 & 0 & 0 & 0 & 0 \\
 \hline
 6 & 0 & 0 & 0 & 0 \\
 \hline
 7 & 0 & 0 & 0 & 0 \\
 \hline
 8 & 9 & 6 & 0 & 6 \\
 \hline
 9 & 24 & 8 & 9 & 16 \\
 \hline
 10 & 120 & 26 & 40 & 64 \\
 \hline
 11 & 446 & 85 & 109 & 190 \\
 \hline
 12 & 1952 & 312 & 489 & 729 \\
 \hline
 13 & 8614 & 1221 & 1995 & 2954 \\
 \hline
 14 & 39291 & 5495 & 8808 & 13104 \\
 \hline
 15 & 187121 & 25181 & 41771 & 61343 \\
 \hline
 16 & 892851 & 116071 & 198290 & 288557 \\
 \hline
 Totals & 1130428 & 148405 & 251511 & 366963 \\ [1ex]
 \hline
\end{tabular}
\end{table} 

\begin{table}[ht]
\caption{Prime knots with $\omega(D)=4$ and maximal rank Coxeter quotients}
\centering
\begin{tabular}{||c c c c c c||} 
 \hline
 Crossing $\#$ &  $\omega(D)=4$ & MRCQ to $A_4$ & MRCQ to $H_4$ & MRCQ to $D_4$ & To $A_4$, $H_4$ or $D_4$ \\ [0.5ex] 
 \hline\hline
 3 & 0 & 0 & 0 & 0 & 0 \\ 
 \hline
 4 & 0 & 0 & 0 & 0 & 0 \\
 \hline
 5 & 0 & 0 & 0 & 0 & 0 \\
 \hline
 6 & 0 & 0 & 0 & 0 & 0 \\
 \hline
 7 & 0 & 0 & 0 & 0 & 0 \\
 \hline
 8 & 0 & 0 & 0 & 0 & 0 \\
 \hline
 9 & 0 & 0 & 0 & 0 & 0 \\
 \hline
 10 & 0 & 0 & 0 & 0 & 0 \\
 \hline
 11 & 15 & 15 & 15 & 15 & 15 \\
 \hline
 12 & 48 & 13 & 48 & 13 & 48 \\
 \hline
 13 & 1022 & 456 & 577 & 316 & 595 \\
 \hline
 14 & 6958 & 1387 & 3686 & 1069 & 3788 \\
 \hline
 15 & 64723 & 11944 & 25191 & 7975 & 29588 \\
 \hline
 16 & 488032 & 63258 & 165166 & 42282 & 189566 \\
 \hline
 Totals & 560798 & 77073 & 194683 & 51670 & 223600 \\ [1ex]
 \hline
\end{tabular}
\end{table} 

\begin{table}[ht]
\caption{Prime knots with $\omega(D)=5$ and maximal rank Coxeter quotients}
\centering
\begin{tabular}{||c c c c c||} 
 \hline
 Crossing number & Knots with $\omega(D)=5$ & MRCQ to $A_5$ & MRCQ to $D_5$ & MRCQ to $A_5$ or $D_5$ \\ [0.5ex] 
 \hline\hline
 3 & 0 & 0 & 0 & 0 \\ 
 \hline
 4 & 0 & 0 & 0 & 0 \\
 \hline
 5 & 0 & 0 & 0 & 0 \\
 \hline
 6 & 0 & 0 & 0 & 0 \\
 \hline
 7 & 0 & 0 & 0 & 0 \\
 \hline
 8 & 0 & 0 & 0 & 0 \\
 \hline
 9 & 0 & 0 & 0 & 0 \\
 \hline
 10 & 0 & 0 & 0 & 0 \\
 \hline
 11 & 0 & 0 & 0 & 0 \\
 \hline
 12 & 0 & 0 & 0 & 0 \\
 \hline
 13 & 0 & 0 & 0 & 0 \\
 \hline
 14 & 30 & 30 & 30 & 30 \\
 \hline
 15 & 62 & 22 & 22 & 22 \\
 \hline
 16 & 5072 & 3479 & 3511 & 3511 \\
\hline
 Totals & 5164 & 3531 & 3563 & 3563 \\ [1ex] 
 \hline
\end{tabular}
\end{table} 

\section{Brief remarks}

\subsection{Bridge number and crossing number}
Our computations suggest the following relationship between the bridge number and crossing number of a knot. 
\begin{conj}
	Let $n\geq 3$ and let $K$ be a prime knot with bridge number equal to $n$. The crossing number of $K$ is at least $3n-1$. 
\end{conj}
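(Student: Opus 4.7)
The plan is to translate the conjecture into the Wirtinger-number framework of Section~\ref{section-background} and argue combinatorially. By Theorem~\ref{thm-wirt}, $\omega(K) = \beta(K) = n$, so it suffices to establish the following diagrammatic strengthening: \emph{if $D$ is a prime diagram of a knot with $\omega(D) \geq 3$, then $c(D) \geq 3\,\omega(D) - 1$.} Applied to a minimum-crossing diagram $D_{\min}$ of a prime knot $K$ (which is prime as a diagram when $K$ is prime), this yields $c(K) = c(D_{\min}) \geq 3\,\omega(D_{\min}) - 1 \geq 3\,\omega(K) - 1 = 3n - 1$.

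To attack the diagrammatic inequality, fix a minimum Wirtinger coloring of $D$ with $k = \omega(D)$ seed strands. In any connected diagram the number of strands equals the number of crossings $c$, and a complete coloring sequence performs $c - k$ coloring moves, each at a distinct crossing. This leaves exactly $k$ \emph{unused} crossings at which all three incident strands are already colored before any would-be move applies. The target inequality $c \geq 3k - 1$ then rewrites as
\[
    (\text{coloring crossings}) \;=\; c - k \;\geq\; 2k - 1 \;=\; 2\cdot(\text{unused crossings}) - 1.
\]

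The core step is to associate, to each unused crossing $u$, the two coloring crossings that ``gave birth'' to the two understrands meeting at $u$, and then to argue that this association is nearly injective across all unused crossings. Concretely, one would aim to show that any substantial overlap between the coloring histories of two unused crossings forces the diagram to contain a nugatory crossing, a bigon, or a simple closed curve meeting $D$ transversally in two points with crossings on both sides. Each such configuration contradicts either the primeness or the minimality of the diagram. Counting the resulting distinct coloring crossings then yields the bound, with the $-1$ accounting for a single permissible shared pair.

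The main obstacle will be executing this near-injectivity argument rigorously. Unlike the twisted or arborescent settings of \cite{baader2021coxeter,baader2020twigs}, where a surface decomposition constrains the coloring, a general prime diagram carries no canonical structure, and the coloring DAG produced by the Wirtinger process is highly non-unique. One must either choose the coloring sequence adapted to the combinatorics of $D$ (for instance, greedily extending to maximize downstream coloring opportunities) or pass to a coloring-independent invariant by minimizing over all sequences. A natural fallback would be induction on crossing number, taking the paper's computational verification through 16 crossings as the base case and analyzing how a prime $n$-bridge knot of crossing number $c$ can be produced from a prime $(n-1)$-bridge knot of crossing number at most $c - 3$ via a controlled local modification; but maintaining both primeness and bridge number under such a modification is itself a delicate problem that I expect to be the principal difficulty of that alternative route.
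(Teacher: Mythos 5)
First, be aware of what the paper actually does with this statement: it is stated as a conjecture and is \emph{not} proved. The authors only verify it computationally for all prime knots through 16 crossings, using the Wirtinger numbers of crossing-minimizing diagrams as upper bounds for bridge number, and they observe that the bound is sharp, being realized by the pretzel knots $P(2,3,3,\dots,3)$. So there is no proof in the paper to compare against, and a complete argument along your lines would be a new theorem.

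As for your sketch: the reduction to the diagrammatic inequality $c(D)\geq 3\,\omega(D)-1$ for a prime minimal diagram is sound, and the bookkeeping of $c-k$ coloring crossings versus $k$ unused crossings is correct. But the core step, as set up, does not yield the bound, and the difficulty is not where you place it. The ``near-injectivity'' you worry about is in fact automatic: a coloring move at a crossing $x$ colors exactly one strand, and a strand is an understrand at exactly two crossings (in a reduced diagram), so each coloring crossing is the birth crossing of an understrand at \emph{at most one} unused crossing. The genuine obstruction is the seeds. An understrand at an unused crossing may be a seed strand, which has no birth crossing and contributes nothing to your count; what your association actually proves is $c-k\geq 2k-S$, where $S$ is the number of incidences of seed strands as understrands at unused crossings. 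Since each of the $k$ seeds is an understrand at two crossings, $S$ can a priori be as large as $2k$, while you need $S\leq 1$. Nothing in the sketch controls $S$, and it is precisely here that primeness and minimality would have to enter; the configurations you invoke (nugatory crossings, bigons, connect-sum circles) are not visibly forced by $S\geq 2$, especially since the coloring sequence and even the choice of minimal seed set are highly non-unique. The inductive fallback has the further problem, which you acknowledge, that producing an $n$-bridge prime knot from an $(n-1)$-bridge one by a controlled local move while tracking crossing number is itself unsupported by anything in the paper. In short, the conjecture remains open; your proposal identifies reasonable quantities to count but does not close the gap.
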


For all prime knots through 16 crossings, the conjecture can be verified using the upper bounds on the bridge number obtained from the Wirtinger numbers of crossing-number minimizing diagrams in the knot table. Remark also that the lower bound we propose is optimal: for any $n\geq 3$ there exists a knot with exactly $3n-1$ crossings and bridge number $n$, namely the pretzel knot $P(2,3,3,...,3)$. Of course, the conjectured inequality would not hold for links as, for example, an unlink on more than one component would violate it. Non-prime knots also easily violate the inequality, for example the connected sum of a trefoil with itself.

\subsection{Homomorphisms to infinite Coxeter groups}

As previously discussed, maximal rank Coxeter quotients were used in~\cite{baader2021coxeter, baader2020twigs} to prove the Meridional Rank Conjecture for large infinite families of links. The Coxeter quotients used in that proof are in the vast majority of cases infinite. We expect that for a sizable fraction of the knots studied in \cite{baader2021coxeter, baader2020twigs} no maximal rank {\it finite} Coxeter quotients exist, though no large-scale computations have been performed due to the high crossing numbers of these knots and the absence of a tabulation. Nevertheless, we posit that extending the current work to infinite Coxeter groups is likely to result in computing the meridional rank of many more knots. As far as we know, it is an open question whether the meridional rank of a knot is always detected in a finite quotient (not necessarily to a Coxeter group).

We give an explicit example of a 12-crossing knot whose meridional rank is detected in an infinite Coxeter quotient but not in a finite one.

\begin{figure}
\begin{center}\labellist
\small
 \pinlabel $a$ at 196 568
   \pinlabel $b$ at 394 607
  \pinlabel $c$ at 454 606
\endlabellist
\includegraphics[height=2.3in, width=4.1in]{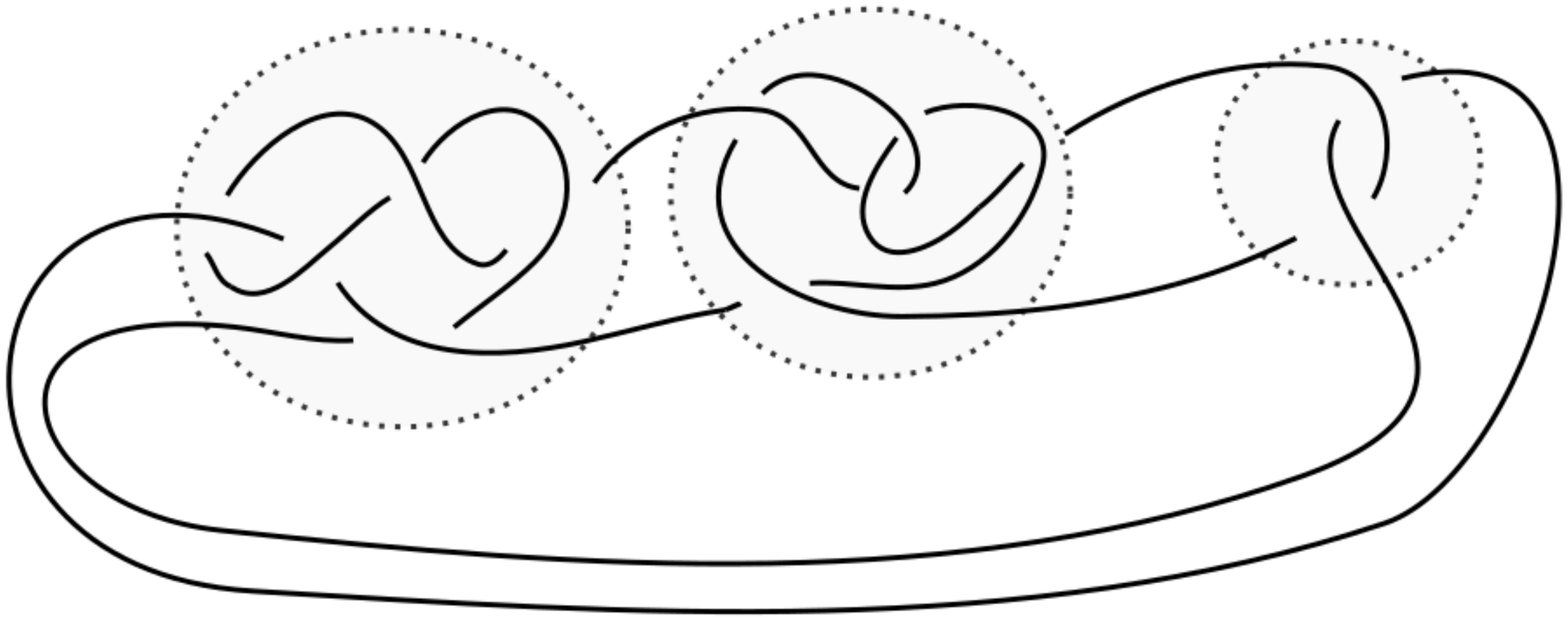}
\caption{The Montesinos knot $12a210$, together with a maximal rank quotient onto the Coxeter group $\langle a, b, c | a^2=b^2= c^2=1,(ab)^3= (ac)^7=(bc)^2=1\rangle$. The knot does not admit a MRCQ onto a finite Coxeter group.
\label{fig-12a210}
}
\end{center}
\end{figure}

\begin{example}
The knot $12a210$ admits a maximal rank Coxeter quotient to the infinite Coxeter group determined by the Coxeter matrix $\begin{bmatrix} 1 & 2 & 3 \\ 2&1&7 \\ 3&7&1
\end{bmatrix},$ see Figure~\ref{fig-12a210}. 
This is a Montesinos knot on three rational tangles, and the existence of this maximal rank Coxeter quotient also follows from~\cite[p. 1551, footnote 2]{baader2021coxeter}.  In contrast, our algorithm proves that there does not exist a homomorphism onto $A_3$ or $H_3$. In other words, even though there is a homomorphism to an infinite Coxeter group that establishes the MRC for this knot, there is no maximal rank Coxeter quotient to a finite Coxeter group that achieves the same.  Many other explicit examples of infinite MRCQs for 3-bridge knots can be found in~\cite{Ryffel2019}.	 
\end{example}

The non-existence of a maximal rank Coxeter quotient for some knots may guide the search for potential counter-examples to the Meridional Rank Conjecture. At a minimum, many knots are ruled out as possible counter-examples by our method. However, it is also known that the meridional rank of a knot is not always detected in a Coxeter quotient. It was shown by Ryffel that many torus knots not only do not admit a maximal rank Coxeter quotient but do not admit any nontrivial Coxeter quotients whatsoever.

\begin{thm}[\cite{Ryffel2019}] Let $p, q\in\mathbb{Z}$ be coprime odd integers such that $p\geq 3$ and $q$ has no factor less than or equal to $\max \{5, p\}$. 	Then the $(p, q)$-torus knot does not admit any non-trivial Coxeter quotients.
\end{thm}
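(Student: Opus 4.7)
The plan is to use the standard two-generator presentation $\pi_1(S^3 \setminus T(p,q)) = \langle \alpha, \beta \mid \alpha^p = \beta^q \rangle$, in which $\alpha^p = \beta^q$ generates the center and a meridian is represented by $m = \alpha^a \beta^{-b}$ with $aq - bp = 1$ (so $\gcd(a,p) = \gcd(b,q) = 1$). I would suppose toward contradiction that $\phi$ is a non-trivial Coxeter quotient onto a group $W$, set $A = \phi(\alpha)$, $B = \phi(\beta)$, and $M = \phi(m) = A^a B^{-b}$, with $M \neq 1$ a reflection and (after replacing $W$ by $\phi(\pi_1(S^3 \setminus T(p,q)))$) $W$ generated by the $W$-conjugacy class of $M$. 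The first major step is to analyze the central element $z = A^p = B^q$: in a Coxeter group generated by a single conjugacy class of reflections, the center has a highly restricted structure (essentially generated by the longest element of a finite parabolic factor, or trivial). Combining this with the relations $z = A^p = B^q$ and the hypothesis that $q$ has no prime factor $\leq \max\{5, p\}$ should force $z = 1$.

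Once $z = 1$, the relations $A^p = B^q = 1$ hold, so $\langle A, B\rangle$ is a quotient of the triangle-group-type object $\langle x, y \mid x^p = y^q = (x^a y^{-b})^2 = 1 \rangle$ sitting inside $W$, with $M = A^a B^{-b}$ mapping to a reflection. The image is generated by the conjugate reflections $gMg^{-1}$ for $g \in \langle A, B\rangle$; pairwise products of these reflections have orders equal to the corresponding Coxeter matrix entries of $W$, which are integers $\geq 2$ or $\infty$. Expressing these products back in terms of $A$ and $B$ yields a system of word relations whose orders must simultaneously divide powers of $p$ (through the $A$-factors) and powers of $q$ (through the $B$-factors). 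The hypothesis that $q$ has no small prime factor is designed precisely to rule out all such divisibility conditions except those forcing every relevant Coxeter matrix entry to equal $2$, which causes $A$ and $B$ to commute and the image to collapse to an elementary abelian $2$-group and then to the trivial group, contradicting $M \neq 1$.

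The main obstacle will be the detailed case analysis at this last step: one must enumerate the possible shapes of the Coxeter diagram restricted to the $\langle A,B \rangle$-orbit of $M$ and verify that the arithmetic constraints coming from the prime factors of $q$ rule out every irreducible component, including the infinite hyperbolic and affine types that could a priori accommodate a triangle group. Here the classification of finite Coxeter groups (Theorem~\ref{CoxeterClassification}) together with Lemma~\ref{lem-AnyDiagram}, which restricts attention to diagrams in which all reflections belong to a single conjugacy class, reduces the bookkeeping to a manageable finite list of cases in each rank. A secondary worry is that the center argument in the first step is delicate when $W$ is infinite, and a separate treatment may be needed there, possibly via the geometric representation of $W$ and the observation that a non-trivial central $z = A^p$ would impose an eigenvalue structure on $A$ that is incompatible with $A$ being a product of $q$ conjugate reflections (using $\alpha \mapsto q$ under abelianization).
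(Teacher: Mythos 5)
First, a remark on the comparison itself: the paper does not prove this statement. It is quoted verbatim from \cite{Ryffel2019} and used as a black box, so there is no internal proof to measure your outline against; I can only assess the outline on its own terms.

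The first major step of your plan is wrong in a way that cannot be repaired. You propose to show that the image $z=A^p=B^q$ of the central element is trivial. In fact it is provably \emph{non-trivial} in every good Coxeter quotient of an odd torus knot group: every Coxeter group $W$ carries the sign character $\varepsilon\colon W\to\{\pm 1\}$ sending each Coxeter generator, hence each reflection, to $-1$. Since $\phi(m)$ is a reflection and $z$ is homologous to $m^{pq}$, we get $\varepsilon(\phi(z))=(-1)^{pq}=-1$ because $p$ and $q$ are both odd. Hence $\phi(z)\neq 1$, and your entire second stage analyzes the wrong group (a quotient in which $A^p=B^q=1$). The productive observation is the opposite one: $W$ is generated by a single conjugacy class of reflections, hence its Coxeter diagram is connected and $W$ is irreducible; an irreducible infinite Coxeter group has trivial center, so the non-trivial central element $\phi(z)$ forces $W$ to be \emph{finite}, with $\phi(z)=w_0$ acting as $-\mathrm{id}$ in the reflection representation. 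Only at that point does the classification of finite Coxeter groups enter, via the short list of irreducible finite types with $-1\in W$ and a single class of reflections, played against the constraint that $B=\phi(y)$ has order $2j$ with $j\mid q$, so that the hypothesis on the prime factors of $q$ can be compared with the element orders of the candidate groups (this is where the bounds $5$ and $p$ in the statement do their work).

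Two secondary problems. The order of a product of two arbitrary reflections in a Coxeter group need not equal an entry of the Coxeter matrix --- that holds only for products of \emph{simple} reflections --- so the divisibility bookkeeping in your second paragraph is not correctly set up even if one grants its premise. And Lemma~\ref{lem-AnyDiagram} does not restrict attention to single-conjugacy-class presentations; that restriction comes from the fact that all meridians of a knot are conjugate and normally generate the knot group.
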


On the other hand, the Meridional Rank Conjecture holds for all torus links~\cite{rost1987meridional}. \\

\section*{Acknowledgement}
The authors would like to thank Curtis Bennett, John Brevik and Jon McCammond for helpful conversations about Coxeter groups; and Sebastian Baader and Levi Ryffel for offering feedback on a draft of this paper. RB and NM were partially supported by NSF grant DMS-1821254. A portion of this work was completed while AK was a guest at the Max Planck Institute for Mathematics in Bonn. We are grateful to MPIM for its hospitality.

\nocite{RZ87}
\bibliographystyle{plain}
\bibliography{wirtinger1}

\hspace{0.5in}

\address{Department of Mathematics, CSULB\\
Long Beach, CA, 90840, USA}\\

\address {Department of Mathematics, University of Notre Dame\\
Notre Dame, IN, 46556, USA}\\

\address {Department of Physics, University of California\\
Berkeley, CA, 94720, USA}\\

\end{document}